\documentclass[reqno,11pt,a4paper]{amsart}

\usepackage{amssymb}
\usepackage[margin=2.7cm]{geometry}
\usepackage{tikz}
\usepackage{mathtools}
\usepackage{enumitem}
\usepackage{subfig}
\usetikzlibrary{shapes.misc}
\usepackage{hyperref}
\hypersetup{
    colorlinks,
    linkcolor={red!50!black},
    citecolor={blue!50!black},
    urlcolor={blue!80!black}
}
\usepackage[ruled,vlined]{algorithm2e}
\usepackage{booktabs}
\newcommand\headercell[1]{\smash[b]{\begin{tabular}[t]{@{}c@{}} #1 \end{tabular}}}

\graphicspath{{./Plots/}}
\tikzset{cross/.style={cross out, draw=black, minimum size=2*(#1-\pgflinewidth), inner sep=0pt, outer sep=0pt}, cross/.default={1pt}}

\newcommand{\ZZ}{\mathbb{Z}}
\newcommand{\RR}{\mathbb{R}}
\newcommand{\QQ}{\mathbb{Q}}
\newcommand{\PP}{\mathbb{P}}
\newcommand{\cT}{\mathcal{T}}
\newcommand{\cS}{\mathcal{S}}
\newcommand{\cP}{\mathcal{P}}
\newcommand{\cW}{\mathcal{W}}
\newcommand{\cA}{\mathcal{A}}

\DeclareMathOperator{\conv}{conv}
\DeclareMathOperator{\width}{width}
\DeclareMathOperator{\mwidth}{mwidth}

\DeclareMathOperator{\Hom}{Hom}
\DeclareMathOperator{\NF}{NF}

\newtheorem{theorem}{Theorem}[section]
\newtheorem{proposition}[theorem]{Proposition}
\newtheorem{lemma}[theorem]{Lemma}
\theoremstyle{definition}
\newtheorem{definition}[theorem]{Definition}
\newtheorem{conjecture}[theorem]{Conjecture}
\newtheorem{corollary}[theorem]{Corollary}

\begin{document}

\author[G.\ Hamm]{Girtrude~Hamm}
\address{School of Mathematical Sciences\\University of Nottingham\\Nottingham\\NG7 2RD\\UK}
\email{girtrude.hamm@nottingham.ac.uk}

\subjclass{Primary: 52B10, 
Secondary: 52B20, 
52-08
}
\keywords{Lattice polytope, multi-width, simplex, classification algorithm}

\title{Classification of width 1 lattice tetrahedra by their multi-width}

\begin{abstract}
We introduce the multi-width of a lattice polytope and use this to classify and count all lattice tetrahedra with multi-width \((1,w_2,w_3)\).
The approach used in this classification can be extended into a computer algorithm to classify lattice tetrahedra of any given multi-width.
We use this to classify tetrahedra with multi-width \((2,w_2,w_3)\) for small \(w_2\) and \(w_3\) and make conjectures about the function counting lattice tetrahedra of any multi-width.
\end{abstract}
\maketitle

\section{Introduction}
\label{sec:intro}

A \emph{lattice polytope} \(P \subseteq \RR^d\) is the convex hull of finitely many lattice points, that is, points in \(\ZZ^d\).
We consider lattice polytopes as being defined only up to affine unimodular equivalence.
Two lattice polytopes are said to be \emph{(affine) equivalent} if one can be mapped to the other by a change of basis of \(\ZZ^d\) followed by an integral translation.
A \emph{lattice simplex} is the convex hull of affinely independent lattice points.
For example, in dimensions 2 and 3 these are triangles and tetrahedra respectively.

Lattice simplices are recurring objects of study with multiple applications.
Via toric geometry they are relevant to algebraic geometry and are closely related to toric \(\QQ\)-factorial singularities.
The toric Fano three-folds with at most terminal singularities were classified by finding all the three-dimensional lattice polytopes whose only lattice points were the origin and their vertices \cite{AKClassTet}.
A key step towards this was classifying all such tetrahedra.
Simplices whose only lattice points are their vertices can give terminal quotient singularities by placing one vertex at the origin and considering the cone they generate.
These are called \emph{empty simplices} and were classified in dimension 3 and 4 in \cite{empty_dim_3_simpl} and \cite{empty_dim_4_simpl} respectively.
There are also applications of lattice simplices in mixed-integer and integer optimisation, see for example \cite{MaxLatFreePoly} and \cite{AverkovNill}.

An important affine invariant of a polytope is its width.
Recall that for a lattice polytope \(P \subseteq \RR^d\) and a primitive dual vector \(u \in (\ZZ^d)^*\) the width of \(P\) with respect to \(u\), written \(\width_u(P)\), is the length of the interval obtained by projecting \(P\) along the hyperplane with normal vector \(u\); that is, \(\width_u(P) \coloneqq \max_{x \in P}\{u \cdot x\}-\min_{x \in P}\{u \cdot x\}\).
Then the \emph{(first) width} of \(P\), written \(\width^1(P)\), is the minimum width along all non-zero dual vectors \(u\), i.e. \(\width^1(P) \coloneqq \min_{u \in (\ZZ^d)^*\setminus \{0\}} \{\width_u(P)\}\).
Width plays a role in the proofs of both \cite{empty_dim_4_simpl} and \cite{MaxLatFreePoly} mentioned above which motivates seeking an understanding of the simplices of a given width.
However, in dimension at least 2, there are infinitely many simplices of a given width.
We would like to record enough information about the widths of a polytope so that there are only finitely many polytopes satisfying these conditions.
To do this we consider the width of a lattice polytope in multiple directions.
For a linearly independent collection of dual vectors \(u_1, \dots, u_d \in (\ZZ^d)^*\) we can consider the tuple whose entries are \(\width_{u_i}(P)\).
By applying lexicographical order to \(\ZZ^d_{\geq 0}\) we find the minimum such tuple.
We call this the multi-width of \(P\) written \(\mwidth(P)\) and the \(i\)-th entry of this tuple is the \(i\)-th width of \(P\) written \(\width^i(P)\).
Since \(\width^i(P)\) is always greater than or equal to \(\width^{i-1}(P)\) from now on, unless otherwise specified, let \(w_1\), \(w_2\) and \(w_3\) be integers satisfying \(0 < w_1 \leq w_2 \leq w_3\).

This author completely classified lattice triangles by their multi-widths in \cite{triangles}.
The result is surprisingly simple and it produces a normal form for triangles from which both their width and automorphism groups can be easily read.
Additionally, it shows that the sequence counting lattice triangles with second width at most \(w_2\) has generating function equal to the Hilbert series of a degree 8 hypersurface in \(\PP(1,1,1,2,2,2)\).
Here we investigate how much of this can be extended to the three dimensions.
Ideally, we would describe the finite sets \(\cT_{w_1,w_2,w_3}\) defined as follows.
\begin{definition}\label{def:tet_up_to_aff}
For integers \(0 < w_1 \leq w_2 \leq w_3\) the set of tetrahedra with multi-width \((w_1,w_2,w_3)\) up to affine equivalence (denoted by \(\sim\)) is
\[
\cT_{w_1,w_2,w_3} \coloneqq \{T = \conv(v_1,v_2,v_3,v_4) : v_i \in \ZZ^3, \mwidth(T) = (w_1,w_2,w_3)\}/\sim.
\]
\end{definition}
Theorem~\ref{thm:main} achieves this when the first width is 1 by establishing a bijection between \(\cT_{1,w_2,w_3}\) and a set of tetrahedra \(\cS_{1,w_2,w_3}\).
To define \(\cS_{1,w_2,w_3}\) we must first recall the classification of lattice triangles by their multi-width, then define the four types of tetrahedron which our new classification will include.

\begin{definition}\label{def:tri_class}
Let \(\cS_{w_1,w_2}\) be the set of lattice triangles
\begin{itemize}
    \item \(\conv((0,0),(w_1,y_1),(0,w_2))\) where \(0 \leq y_1 \leq (w_2-y_1 \mod w_1)\),
    \item \(\conv((0,0),(w_1,y_1),(x_2,w_2))\) where \(0<x_2\leq \frac{w_1}{2}\) and \(0 \leq y_1 \leq w_1-x_2\) \\(and \(y_1 \geq x_2\) if \(w_1=w_2\))
    \item and if \(w_1<w_2\), \(\conv((0,y_0),(w_1,0),(x_2,w_2))\) where \(1 < x_2 < \frac{w_1}{2}\) and \(0 < y_0 < x_2\).
\end{itemize}
\end{definition}
As shown in \cite[Theorem 1.2]{triangles} the map taking a lattice triangle to its affine equivalence class is a bijection from \(\cS_{w_1,w_2}\) to the set of lattice triangles with multi-width \((w_1,w_2)\) up to affine equivalence.

We now define \(\cS_{1,w_2,w_3}\), the main classification result of the paper:
\begin{definition}\label{def:tet_class}
The four types of tetrahedron which will appear in \(\cS_{1,w_2,w_3}\) are
\begin{enumerate}
    \item\label{item:tet_0001} \(\conv(\{0\} \times t, (1,0,0))\) where \(t \in \cS_{w_2,w_3}\),
    \item\label{item:tet_0011_0} \(\conv((0,0,0),(0,w_2,z_1),(1,0,0),(1,0,w_3))\) where \(0 \leq z_1 \leq \frac{w_2}{2} \),
    \item\label{item:tet_0011_1} \(\conv((0,0,0),(0,w_2,z_1),(1,0,w_3),(1,y_1,0))\) where \(0 < y_1 \leq w_2\) and \(w_3-w_2 \leq z_1 \leq w_3\) and
    \item\label{item:tet_0011_2} \(\conv((0,0,0),(0,w_2,w_3),(1,0,w_3),(1,y_1,z_1))\) where \(0 < z_1 < y_1 < w_2\).
\end{enumerate}
For examples of these see Figure~\ref{fig:tet_eg}.

If \(w_3>w_2>1\) let \(\cS_{1,w_2,w_3}\) be the set containing all tetrahedra of type~\ref{item:tet_0001}-\ref{item:tet_0011_2}.

If \(w_3=w_2>1\) then \(\cS_{1,w_2,w_2}\) is the set of all type~\ref{item:tet_0001} and \ref{item:tet_0011_0} tetrahedra as well as type~\ref{item:tet_0011_1} tetrahedra satisfying \(y_1 \leq z_1\) and type~\ref{item:tet_0011_2} tetrahedra satisfying \(z_1 \leq w_2-y_1\). 

If \(w_3>w_2=1\) then
\begin{align*}
\cS_{1,1,w_3} \coloneqq \{& \conv((0,0,0),(0,1,0),(0,0,w_3),(1,0,0)),\\& \conv((0,0,0),(0,1,w_3-1),(1,0,w_3),(1,1,0)), \\
& \conv((0,0,0),(0,1,w_3),(1,0,w_3),(1,1,0))\}.
\end{align*}

If \(w_3=w_2=1\) then
\begin{align*}
\cS_{1,1,1} \coloneqq \{&\conv((0,0,0),(0,1,0),(0,0,1),(1,0,0)),\\ & \conv((0,0,0),(0,1,1),(1,0,1),(1,1,0))\}.
\end{align*}
These last two cases have elements of type~\ref{item:tet_0001} and \ref{item:tet_0011_1}.
\end{definition}

\begin{figure}[ht]
\centering
\subfloat[Type~\ref{item:tet_0001}]{
\begin{tikzpicture}[x=0.4cm,y=0.4cm]
\draw[very thick,fill=gray!30] (0,0) -- (6,3) -- (2,7) -- cycle;
\draw[very thick] (1,1) -- (0,0);
\draw[very thick] (1,1) -- (6,3);
\draw[very thick] (1,1) -- (2,7);

\draw[dashed] (0,0) -- (6,0) -- (6,7) -- (0,7) -- cycle;
\draw[dashed] (1,1) -- (7,1) -- (7,8) -- (1,8) -- cycle;
\draw[dashed] (0,0) -- (1,1);
\draw[dashed] (6,0) -- (7,1);
\draw[dashed] (6,7) -- (7,8);
\draw[dashed] (0,7) -- (1,8);

\node[draw,circle,inner sep=2pt,fill=white] at (0,0) {};
\node[draw,circle,inner sep=2pt,fill=white] at (6,3) {};
\node[draw,circle,inner sep=2pt,fill=white] at (2,7) {};
\node[draw,circle,inner sep=2pt,fill] at (1,1) {};
\node[] at (0-1.8,0) {(0,0,0)};
\node[] at (8.3,3) {(0,6,3)};
\node[] at (2,8.5) {(0,2,7)};
\node[] at (1-2.4,1) {(1,0,0)};
\end{tikzpicture}}
\subfloat[Type~\ref{item:tet_0011_0}]{
\begin{tikzpicture}[x=0.4cm,y=0.4cm]
\draw[dashed] (0,0) -- (6,0) -- (6,7) -- (0,7) -- cycle;
\draw[very thick,fill=gray!30] (0,0) -- (6,3) -- (1,8) -- cycle;
\draw[very thick] (1,1) -- (0,0);
\draw[very thick] (1,1) -- (6,3);
\draw[very thick] (1,1) -- (1,8);

\draw[dashed] (1,1) -- (7,1) -- (7,8) -- (1,8) -- cycle;
\draw[dashed] (0,0) -- (1,1);
\draw[dashed] (6,0) -- (7,1);
\draw[dashed] (6,7) -- (7,8);
\draw[dashed] (0,7) -- (1,8);

\node[draw,circle,inner sep=2pt,fill] at (0,0) {};
\node[draw,circle,inner sep=2pt,fill=white] at (6,3) {};
\node[draw,circle,inner sep=2pt,fill] at (1,1) {};
\node[draw,circle,inner sep=2pt,fill] at (1,8) {};
\node[] at (0-1.8,0) {(0,0,0)};
\node[] at (8.3,3) {(0,6,3)};
\node[] at (2,8.5) {(1,0,7)};
\node[] at (1-2.4,1) {(1,0,0)};
\end{tikzpicture}}\\
\subfloat[Type~\ref{item:tet_0011_1}]{
\begin{tikzpicture}[x=0.4cm,y=0.4cm]
\draw[dashed] (0,0) -- (6,0) -- (6,7) -- (0,7) -- cycle;
\draw[very thick,fill=gray!30] (0,0) -- (4,1) -- (6,5) -- (1,8) -- cycle;
\draw[very thick] (4,1) -- (1,8);
\draw[dotted] (0,0) -- (6,5);

\draw[dashed] (1,1) -- (7,1) -- (7,8) -- (1,8) -- cycle;
\draw[dashed] (0,0) -- (1,1);
\draw[dashed] (6,0) -- (7,1);
\draw[dashed] (6,7) -- (7,8);
\draw[dashed] (0,7) -- (1,8);

\node[draw,circle,inner sep=2pt,fill] at (0,0) {};
\node[draw,circle,inner sep=2pt,fill=white] at (4,1) {};
\node[draw,circle,inner sep=2pt,fill=white] at (6,5) {};
\node[draw,circle,inner sep=2pt,fill] at (1,8) {};
\node[] at (0-1.8,0) {(0,0,0)};
\node[] at (8.3,5) {(0,6,5)};
\node[] at (2,8.5) {(1,0,7)};
\node[] at (5,-0.5) {(1,3,0)};
\end{tikzpicture}}
\subfloat[Type~\ref{item:tet_0011_2}]{
\begin{tikzpicture}[x=0.4cm,y=0.4cm]
\draw[dashed] (0,0) -- (6,0) -- (6,7) -- (0,7) -- cycle;
\draw[very thick,fill=gray!30] (0,0) -- (5,3) -- (6,7) -- (1,8) -- cycle;
\draw[very thick] (5,3) -- (1,8);
\draw[dotted] (0,0) -- (6,7);

\draw[dashed] (1,1) -- (7,1) -- (7,8) -- (1,8) -- cycle;
\draw[dashed] (0,0) -- (1,1);
\draw[dashed] (6,0) -- (7,1);
\draw[dashed] (6,7) -- (7,8);
\draw[dashed] (0,7) -- (1,8);
\node[draw,circle,inner sep=2pt,fill] at (0,0) {};
\node[draw,circle,inner sep=2pt,fill=white] at (5,3) {};
\node[draw,circle,inner sep=2pt,fill] at (6,7) {};
\node[draw,circle,inner sep=2pt,fill] at (1,8) {};
\node[] at (0-1.8,0) {(0,0,0)};
\node[] at (8.3,7) {(0,6,7)};
\node[] at (2,8.5) {(1,0,7)};
\node[] at (8.3,3) {(1,4,2)};
\end{tikzpicture}}
\caption{Examples of tetrahedra of type~\ref{item:tet_0001}-\ref{item:tet_0011_2} when \(w_2=6\) and \(w_3=7\).
Black vertices are fixed for a given type while white vertices are variable.}
\label{fig:tet_eg}
\end{figure}

We can now state the main theorem of this paper.
\newpage

\begin{theorem}\label{thm:main}
There is a bijection from \(\cS_{1,w_2,w_3}\) to \(\cT_{1,w_2,w_3}\) given by the map taking a tetrahedron to its affine equivalence class.
In particular, 
\begin{itemize}
    \item when \(w_3>w_2>1\) the cardinality of \(\cT_{1,w_2,w_3}\) is
    \begin{itemize}
        \item \(2w_2^2+4\) if \(w_2\) and \(w_3\) even
        \item \(2w_2^2+3\) if \(w_2\) even and \(w_3\) odd
        \item \(2w_2^2+2\) if \(w_2\) odd
    \end{itemize}
    \item when \(w_2>1\) the cardinality of \(\cT_{1,w_2,w_2}\) is
    \begin{itemize}
        \item \(w_2^2+w_2+2\) if \(w_2\) even
        \item \(w_2^2+w_2+1\) if \(w_2\) odd
    \end{itemize}
    \item when \(w_3>1\) the cardinality of \(\cT_{1,1,w_3}\) is \(3\)
    \item and  the cardinality of \(\cT_{1,1,1}\) is \(2\).
\end{itemize}
\end{theorem}

\begin{table}[ht]
    \centering
    \begin{tabular}{@{} ccccccccccccc @{}}
    \headercell{} & \multicolumn{12}{c@{}}{\(w_3\)}\\
    \cmidrule(l){2-13}
    \(w_2\) & 1 & 2 & 3 & 4 & 5 & 6 & 7 & 8 & 9 & 10 & 11 & 12\\
    \midrule
    1 & 2 & 3 & 3 & 3 & 3 & 3 & 3 & 3 & 3 & 3 & 3 & 3\\
    2 & 0 & 8 & 11 & 12 & 11 & 12 & 11 & 12 & 11 & 12 & 11 & 12\\
    3 & 0 & 0 & 13 & 20 & 20 & 20 & 20 & 20 & 20 & 20 & 20 & 20\\
    4 & 0 & 0 & 0 & 22 & 35 & 36 & 35 & 36 & 35 & 36 & 35 & 36\\
    5 & 0 & 0 & 0 & 0 & 31 & 52 & 52 & 52 & 52 & 52 & 52 & 52 \\
    6 & 0 & 0 & 0 & 0 & 0 & 44 & 75 & 76 & 75 & 76 & 75 & 76
\end{tabular}
    \caption{The number of lattice tetrahedra with multi-width \((1,w_2,w_3)\) up to affine equivalence for small \(w_2\) and \(w_3\).}
    \label{tab:width_1_tet}
\end{table}

Table~\ref{tab:width_1_tet} gives the cardinality of \(\cT_{1,w_2,w_3}\) when \(w_2 \leq 6\) and \(w_3 \leq 12\) described by Theorem~\ref{thm:main}.
The idea of the proof is to first show that a tetrahedron with multi-width \((1,w_2,w_3)\) is equivalent to a subset of \([0,1] \times [0,w_2] \times [0,w_3]\).
We then successively classify the possible \(x\)-, \(y\)- and \(z\)-coordinates of the vertices of the tetrahedra.
At each step we remove cases which have too small a width in some direction or are equivalent to other cases.
Two distinct vertices of a tetrahedron may have the same \(x\)- and \(y\)-coordinates so we need to consider multi-sets of lattice points.
In an abuse of notation we will write \(\{v_1,\dots,v_n\}\) for the \(n\)-point multi-set containing lattice points \(v_i \in \ZZ^d\) even when the \(v_i\) are not distinct.
We extend the notion of widths to these sets by saying the width of a set is the width of its convex hull.

We can completely classify the four-point sets in \(\ZZ\) with width \(w_1\).
These can represent the possible \(x\)-coordinates of all four-point sets in \(\ZZ^2\) with multi-width \((w_1,w_2)\).
The second width gives bounds on their possible \(y\)-coordinates and we can completely classify the four-point sets in the plane with multi-width \((1,w_2)\).
Similarly, these represent the possible first two coordinates of the vertices of tetrahedra of multi-width \((1,w_2,w_3)\).
By considering the possible \(z\)-coordinates we can assign to each point we obtain the classification above.

When \(w_1>1\) the number of cases which needs to be checked for this proof style increases dramatically.
However, the method can be used to create an algorithm which classifies the tetrahedra of a given multi-width.
In this way we begin classifying the width 2 case for small multi-width.
The number of tetrahedra classified can be found in Table~\ref{tab:width_2_tet}.
We take this classification only far enough to obtain and test Conjecture~\ref{conj:width_2_tet} on the number of multi-width \((2,w_2,w_3)\) tetrahedra.
The extent to which we can extend the two-dimensional results to the three-dimensional case remains open, but the similarities in the results we have found so far seem hopeful.

\begin{table}[ht]
    \centering
    \begin{tabular}{@{} cccccccccccc @{}}
    \headercell{} & \multicolumn{11}{c@{}}{\(w_3\)}\\
    \cmidrule(l){2-12}
    \(w_2\) & 2 & 3 & 4 & 5 & 6 & 7 & 8 & 9 & 10 & 11 & 12\\
    \midrule
    2 & 17 & 45 & 47 & 45 & 47 & 45 & 47 & 45 & 47 & 45 & 47\\
    3 & 0 & 87 & 178 & 175 & 178 & 175 & 178 & 175 & 178 & 175 & 178\\
    4 & 0 & 0 & 161 & 320 & 325 & 320 & 325 & 320 & 325 & 320 & 325\\
    5 & 0 & 0 & 0 & 244 & 493 & 490 & 493 & 490 & 493& 490 & 493\\
    6 & 0 & 0 & 0 & 0 & 358 & 716 & 721 & 716 & 721 & 716 & 721\\
    7 & 0 & 0 & 0 & 0 & 0 & 482 & 970 & 967 & 970 & 967 & 970\\
    8 & 0 & 0 & 0 & 0 & 0 & 0 & 636 & 1274 & 1279 & 1274 & 1279\\
    9 & 0 & 0 & 0 & 0 & 0 & 0 & 0 & 801 & 1609 & 1606 & 1609\\
    10& 0 & 0 & 0 & 0 & 0 & 0 & 0 & 0 & 995 & 1994 & 1999
    \end{tabular}
    \caption{The number of lattice tetrahedra with multi-width \((2,w_2,w_3)\) up to affine equivalence for small \(w_2\) and \(w_3\). For full list of tetrahedra see \cite{data}.}
    \label{tab:width_2_tet}
\end{table}

In Section~\ref{sec:width} we formally define the multi-width.
We prove some facts about a polytope of given multi-width.
In particular a 3-dimensional lattice polytope with multi-width \((w_1,w_2,w_3)\) is equivalent to a subset of \([0,w_1]\times [0,w_2] \times [0,w]\) where \(w\) is the smallest out of \(w_1+w_3-1\) and \(\max\{w_1+w_2,w_3\}\).
In Section~\ref{sec:quad} we classify the four-point sets in \(\ZZ\) with multi-width \(w_1\) and the four-point sets in \(\ZZ^2\) with multi-width \((w_1,w_2)\) which have \(x\)-coordinates \(\{0,0,0,w_1\}\) or \(\{0,0,w_1,w_1\}\).
A corollary of this is the classification of multi-width \((1,w_2)\) four-point sets.
In Section~\ref{sec:tet} we prove Theorem~\ref{thm:main}.
Propositions~\ref{prop:existence}, \ref{prop:tet_width_is} and \ref{prop:distinct} show that the map taking a lattice tetrahedron to its equivalence class is a well-defined bijection from \(\cS_{1,w_2,w_3}\) to \(\cT_{1,w_2,w_3}\).
In Section~\ref{sec:computational} we describe the computational extension of this classification.
We classify the multi-width \((w_1,w_2)\) four-point sets in the plane and the multi-width \((2,w_2,w_3)\) tetrahedra for small \(w_1\), \(w_2\) and \(w_3\).
Based on these classifications we make conjectures about the functions counting such sets and tetrahedra in general.

\subsection*{Acknowledgements}
I would like to thank my supervisors, Alexander Kasprzyk and Johannes Hofscheier, for generously sharing their expertise and supporting me throughout this project.
Also, thank you to anyone who asked if I could generalise the triangle classification; you inspired me to return to this project.
This research was supported in-part by the Heilbronn Institute for Mathematical Research.

\section{Width and parallelepipeds}
\label{sec:width}

Let \(N \cong \ZZ^d\) be a lattice, \(N^* \coloneqq \Hom(N,\ZZ)\cong \ZZ^d\) its dual lattice and \(N_{\RR} \coloneqq \RR \otimes_{\ZZ} N \cong \RR^d\) the real vector space containing \(N\).
For two tuples of integers \(w=(w_1,\dots,w_d)\) and \(w'=(w_1',\dots,w_d')\) we say that \(w <_{lex} w'\) when there is some \(1\leq i \leq d\) such that \(w_i < w_i'\) and \(w_j=w_j'\) for all \(j<i\). 
This defines the \emph{lexicographic order} on \(\ZZ^d\).
\begin{definition}
Let \(P\) be a lattice polytope and \(u \in N^*\) a dual vector.
We define the \emph{width of \(P\) with respect to \(u\)} to be
\[
\width_u(P) \coloneqq \max_{x \in P} \{u \cdot x\} - \min_{x \in P} \{u \cdot x\}.
\]
Since the widths are non-negative it is possible to define 
\[
\mwidth(P) \coloneqq \min_{u_1,\dots,u_d \in N^*}(\width_{u_1}(P),\dots, \width_{u_d}(P))
\]
where the minimum is taken with respect to lexicographic order and \(u_1,\dots, u_d\) are required to be linearly independent.
We call this tuple the \emph{multi-width} of \(P\) and call \(\width_{u_i}(P)\) the \emph{\(i\)-th width} of \(P\) written \(\width^i(P)\).
\end{definition}
We now define a polytope \(\cW_P \coloneqq (P-P)^*\) which is the dual of the Minkowski sum of \(P\) and \(-P\).
Notice that \(\cW_P\) is a rational polytope but need not be a lattice polytope.
This polytope encodes the widths of \(P\) in the following sense.
\begin{lemma}
\label{lem:width_poly}
For a dual lattice point \(u\), \(\width_u(P) \leq w\) if and only if \(u \in w\cW_P\).
\end{lemma}
\begin{proof}[Proof]
By definition, the dual polytope \(\cW_P\) is the set of rational points \(u\) such that \(u \cdot x \leq 1\) for all \(x \in P-P\).
For a fixed lattice point \(u\), \(\width_u(P) \leq w\) if and only if \(u \cdot (x_1-x_2) \leq w\) for all pairs of points \(x_1\) and \(x_2\) in \(P\).
This is equivalent to saying that \(\frac{1}{w}u \cdot x \leq 1\) for all points \(x\) of \(P-P\), or in other words \(u \in w\cW_P\).
\end{proof}
Note that this is equivalent to saying that the \(i\)-th width of \(P\) is the \(i\)-th successive minimum of \(\cW_P\).
For a definition of the successive minima of a polytope see \cite[p. 581]{KannanLovasz}.
We use Lemma~\ref{lem:width_poly} to prove the following result.
\begin{proposition}\label{prop:in_strip}
    Let \(d \geq 2\) and \(P \subset N_\RR\) be a lattice polytope.
    If \(P\) has widths \(w_1\) and \(w_2\) with respect to two linearly independent, primitive dual vectors, then \(P\) is equivalent to a subset of \([0,w_1] \times [0,w_2] \times \RR^{d-2}\).
\end{proposition}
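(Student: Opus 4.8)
The plan is to reduce the statement to finding two dual vectors that realise the first two widths and that extend to a \(\ZZ\)-basis of \(N^*\). Suppose we can find \(a_1, a_2 \in N^*\) which form part of a basis of \(N^*\) and satisfy \(\width_{a_1}(P) = w_1\) and \(\width_{a_2}(P) = w_2\). Completing \(\{a_1, a_2\}\) to a basis of \(N^*\) and passing to the dual basis of \(N\) gives coordinates \(x_1, \dots, x_d\) in which \(a_i \cdot x = x_i\) for \(i = 1,2\). Then \(x_1\) ranges over an interval of length \(w_1\) and \(x_2\) over an interval of length \(w_2\) as \(x\) runs over \(P\); since \(P\) is a lattice polytope these minima are attained at lattice points, so a single integral translation moves \(P\) into \([0,w_1]\times[0,w_2]\times\RR^{d-2}\). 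This is an affine unimodular equivalence, so the whole proposition follows once such \(a_1, a_2\) are produced.

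For \(a_1\) I would take a primitive \(u_1 \in N^*\) realising the first width. One may assume \(u_1\) is primitive: if \(u_1 = m u_1'\) with \(u_1'\) primitive and \(m \ge 2\) then \(\width_{u_1'}(P) = \width_{u_1}(P)/m \le w_1\), contradicting the minimality of \(w_1\) when \(w_1 \ge 1\) (the degenerate case \(w_1 = 0\) is settled by the same replacement). A primitive vector is part of a basis, so \(a_1 := u_1\) works. I would further arrange, by letting \(u_1, u_2\) be the first two entries of a lexicographically minimal tuple, that the second width is realised by a vector \(u_2\) linearly independent from \(u_1\); consequently every \(v \in N^*\) linearly independent from \(u_1\) satisfies \(\width_v(P) \ge w_2\).

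The crux is producing \(a_2\): I must show the second width can be realised by some \(a_2\) whose image in \(N^*/\ZZ u_1\) is primitive, which is exactly the condition for \(\{u_1, a_2\}\) to extend to a basis. If the image of \(u_2\) is already primitive, set \(a_2 := u_2\). Otherwise write \(u_2 = m v + k u_1\) where \(m \ge 2\), the class of \(v\) in \(N^*/\ZZ u_1\) is primitive, and — after replacing \(v\) by \(v + j u_1\) for a suitable \(j\), which does not change the class of \(v\) — one may take \(|k| \le m/2\). Using that width is subadditive and homogeneous, \(m\,\width_v(P) = \width_{mv}(P) = \width_{u_2 - k u_1}(P) \le \width_{u_2}(P) + |k|\,\width_{u_1}(P) \le w_2 + (m/2)\,w_1\), hence \(\width_v(P) \le w_2/m + w_1/2 \le (w_1 + w_2)/2 \le w_2\), using \(m \ge 2\) and \(w_1 \le w_2\). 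Since \(v\) is linearly independent from \(u_1\) we also have \(\width_v(P) \ge w_2\). When \(w_1 < w_2\) the strict bound \((w_1+w_2)/2 < w_2\) is a contradiction, so this case cannot arise and \(u_2\) was primitive after all; when \(w_1 = w_2\) the inequalities force \(\width_v(P) = w_2\), so \(a_2 := v\) realises the second width and extends to a basis with \(u_1\). Either way the desired \(a_2\) exists.

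The main obstacle is precisely this last step: realising the second width by a vector that is part of a basis. It is a genuine constraint, since for general lattices the successive-minima directions need not lie in a common basis, and what rescues the argument is the ordering \(w_1 \le w_2\), which keeps the averaged bound \((w_1+w_2)/2\) no larger than \(w_2\). The same averaging would break down for three or more widths, consistent with the failure of the basis property for higher successive minima; this is also why the proposition is stated only for the first two widths. A more intrinsic phrasing might route the argument through the dilation polytope \(\cW_P\) and the characterisation \(\width_u(P) = w \iff u \in \partial(w\cW_P)\) recalled above, reading \(w_1, w_2\) as the first two lattice-width minima of \(\cW_P\); but the subadditivity computation seems the most direct route.
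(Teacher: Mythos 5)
Your proof is correct, and it shares the paper's overall skeleton --- both arguments reduce the proposition to showing that the second width is realised by a dual vector which, together with a primitive \(u_1\) realising the first width, extends to a basis of \(N^*\) --- but the key step is established by a genuinely different mechanism. The paper argues geometrically through the width body: since \(u_1, u_2 \in w_2\cW_P\) and \(w_2\cW_P\) is convex, the triangle \(\conv(0,u_1,u_2)\) lies in \(w_2\cW_P\), and a standard lattice fact supplies a point \(u_2'\) of this triangle such that \(\{u_1,u_2'\}\) is a basis of the saturated rank-two sublattice, whence \(\width_{u_2'}(P)\leq w_2\) and minimality forces equality. You instead work algebraically in the quotient \(N^*/\ZZ u_1\): writing \(u_2 = mv + ku_1\) with \(\bar v\) primitive and \(|k|\leq m/2\), subadditivity and homogeneity of \(u \mapsto \width_u(P)\) give \(\width_v(P) \leq w_2/m + w_1/2 \leq (w_1+w_2)/2 \leq w_2\), while lexicographic minimality gives \(\width_v(P)\geq w_2\), with the case split \(w_1<w_2\) (contradiction, so \(\bar u_2\) was primitive) versus \(w_1=w_2\) (take \(a_2=v\)). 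In fact your \(v=(u_2-ku_1)/m\) lies in \(\conv(0,\pm u_1,u_2)\), so your computation amounts to an elementary, self-contained proof of the triangle lemma the paper invokes (up to the harmless symmetry \(\width_{-u_1}=\width_{u_1}\)), with convexity of \(\cW_P\) replaced by subadditivity of the width function. What your route buys is independence from \(\cW_P\) and the lattice-point-in-triangle fact, plus an explicit record of exactly where \(w_1\leq w_2\) enters (the averaging bound) --- and your closing remark correctly identifies why this averaging, hence the basis-extension property, fails beyond the first two widths; what the paper's route buys is brevity and the marginally sharper location \(u_2'\in\conv(0,u_1,u_2)\). One line worth spelling out if you write this up: \(\width_v(P)\geq w_2\) for every \(v\) linearly independent from \(u_1\) holds because any such pair extends to a linearly independent \(d\)-tuple whose width tuple would otherwise be lexicographically smaller than \((w_1,w_2,\dots)\); you assert this correctly, but it is the one place where the precise definition of multi-width is doing work.
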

\begin{proof}
Relabeling if necessary we may assume \(w_1\leq w_2\).
Pick linearly independent, primitive dual vectors \(u_1\) and \(u_2\) which realise the stated widths of \(P\).
We know that \(u_1,u_2 \in w_2\cW_P\).
As real vectors, \(u_1\) and \(u_2\) generate a two-dimensional vector space containing a sublattice of \(N^*\).
The triangle \(\conv(0,u_1,u_2) \subseteq w_2\cW_P\) contains a lattice point \(u_2'\) such that \(\{u_1,u_2'\}\) is a basis for this sublattice.
Since \(u_2' \in w_2\cW_P\) we know that \(\width_{u_2'}(P) \leq w_2\).
After a change of basis, we may assume that \(u_1\) and \(u_2'\) are the first two standard basis vectors.
This change of basis and a translation are sufficient to map \(P\) to a subset of \([0,w_1]\times[0,w_2]\times\RR^{d-2}\).
\end{proof}
Notice that this is an artefact of the first two dimensions since it relies on Pick's theorem to assume that all empty lattice triangles are equivalent.
In dimensions 3 and higher we may no longer assume that all empty simplices are equivalent.

\begin{proposition}\label{prop:containing_rectangle}
Let \(Q\) be a lattice polytope with widths \(w_1\), \(w_2\) and \(w_3\) in three linearly independent directions.
Assume that \(0<w_1 \leq w_2 \leq w_3\) then \(Q\) is equivalent to a subset of \([0,w_1] \times [0,w_2] \times [0,w_1+w_3-1]\).
Furthermore, if \((w_1,w_2,w_3)\) is the multi-width of \(Q\) then \(Q\) is equivalent to a subset of \([0,w_1] \times [0,w_2] \times [0,\max\{w_1+w_2,w_3\}].\)
\end{proposition}
\begin{proof}
By Proposition~\ref{prop:in_strip} we may assume that \(Q\) is a subset of the parallelepiped
\[
P \coloneqq \{v \in \RR^3: (1,0,0) \cdot v \in [0,w_1], (0,1,0) \cdot v \in [0,w_2], u \cdot v \in [a,a+w_3]\}
\]
for some integer \(a\) and some dual vector \(u\) linearly independent to \((1,0,0)\) and \((0,1,0)\).
Say \(u=(u_x,u_y,u_z)\) then \(u_z \neq 0\).
In fact we may assume \(u_z>0\), otherwise replace \(u\) with \(-u\) and adjust \(a\) so this does not change \(P\).
Therefore, we may pick integers \(k_x\) and \(k_y\) such that \(0 \leq k_iu_z-u_i < u_z\).
Now let \(\varphi\) be the shear described by
\[
(x,y,z) \mapsto (x,y,k_xx+k_yy+z).
\]
By inspecting the \(z\)-coordinates of the vertices of \(\varphi(P)\) we can show that
\begin{align*}
\width_{(0,0,1)}(\varphi(Q)) \leq & \frac{w_1(k_xu_z-u_x)+w_2(k_yu_z-u_y)+w_3}{u_z}\\
\leq & w_1\left(1-\frac{1}{u_z}\right)+w_2\left(1-\frac{1}{u_z}\right)+w_3\frac{1}{u_z}
\end{align*}
Since \(w_2 \leq w_3\) this is less that \(w_1+w_3\).
After a translation this shows that \(Q\) is equivalent to a subset of \([0,w_1]\times[0,w_2]\times[0,w_1+w_3-1]\).
This uses the fact that \(Q\) is a lattice polytope so has integral widths.

Now suppose the multi-width of \(Q\) is \((w_1,w_2,w_3)\) then we show that \(Q\) is equivalent to a subset of \([0,w_1] \times [0,w_2] \times [0,\max\{w_1+w_2,w_3\}]\).
In the above inequalities if \(u_z=1\) then \(\width_{(0,0,1)}(\varphi(Q)) \leq w_3\) so we are done.
If \(u_z \geq 2\) consider the fact that \(\width_{(0,0,1)}(\varphi(Q)) \leq w_1+w_2 + \frac{w_3-w_1-w_2}{u_z}\).
If \(w_3>w_1+w_2\) then \(w_1 + w_2 + \frac{w_3-w_1-w_2}{u_z}\) is at most \(\frac{w_1+w_2+w_3}{2}\) which is less than \(w_3\).
If \(w_3 \leq w_1+w_2\) then \(w_1+w_2+\frac{w_3-w_2-w_1}{u_z}\) is at most \(w_1+w_2\).
Since the third width of \(\varphi(Q)\) is \(w_3\) and its first two widths are realised by \((1,0,0)\) and \((0,1,0)\) it cannot have width less that \(w_3\) with respect to \((0,0,1)\).
This eliminates the case \(u_z \geq 2\) and \(w_3 > w_1+w_2\) and so, after a translation, \(\varphi(Q)\) is a subset of the desired box.
\end{proof}
This shows that any 3-dimensional lattice polytope with multi-width \((w_1,w_2,w_3)\) is equivalent to a subset of \([0,w_1]\times [0,w_2] \times [0,w]\) where 
\[
w \coloneqq \min\{w_1+w_3-1,\max\{w_1+w_2,w_3\}\}.
\]
This bound may not be sharp in general.

\section{Four-point sets in the plane}
\label{sec:quad}

The main aim of this section is to classify the four-point sets in the plane with first width \(1\).
The four-point sets with multi-width \((1,1)\) are just \(\{(0,0),(1,0),(0,1),(1,1)\}\) and \(\{(0,0),(0,0),(1,0),(0,1)\}\).
When \(w_2>1\) the four-point sets with multi-width \((1,w_2)\) are \(\{(0,0),(0,w_2),(0,y_0),(1,0)\}\) where \(y_0 \in [0,\frac{w_2}{2}]\) and \(\{(0,0),(0,w_2),(1,0),(1,y_1)\}\) where \(y_1 \in [0,w_2]\) (for example, see Figure~\ref{fig:4-pt_sets}).
This can be proven directly but here we will prove a more general result.
We will classify all four-point sets \(S\) in the plane with multi-width \((w_1,w_2)\) where \(w_2>w_1\) and if \(\width_{u_1}(S)=w_1\) then all points of \(S\) are contained in the two hyperplanes with normal vector \(u_1\) bounding \(S\).
This is sufficient to classify the width 1 four-point sets in the plane while being the most general classification which is practical to obtain with this method.
We do this because it may be useful towards a future extension of the tetrahedron classification.

\begin{figure}[b]
\centering
\begin{tikzpicture}[x=0.5cm,y=0.5cm]
\draw[fill=gray!30] (0,0) -- (0,4) -- (1,0) -- cycle;
\node[draw,circle,inner sep=1pt,fill] at (0,0) {};
\node[draw,circle,inner sep=1pt,fill] at (0,4) {};
\node[draw,circle,inner sep=1pt,fill] at (1,0) {};
\node[draw,circle,inner sep=2pt] at (0,0) {};

\draw[fill=gray!30] (0+2,0) -- (0+2,4) -- (1+2,0) -- cycle;
\node[draw,circle,inner sep=1pt,fill] at (0+2,0) {};
\node[draw,circle,inner sep=1pt,fill] at (0+2,4) {};
\node[draw,circle,inner sep=1pt,fill] at (1+2,0) {};
\node[draw,circle,inner sep=1pt,fill] at (0+2,1) {};

\draw[fill=gray!30] (0+4,0) -- (0+4,4) -- (1+4,0) -- cycle;
\node[draw,circle,inner sep=1pt,fill] at (0+4,0) {};
\node[draw,circle,inner sep=1pt,fill] at (0+4,4) {};
\node[draw,circle,inner sep=1pt,fill] at (1+4,0) {};
\node[draw,circle,inner sep=1pt,fill] at (0+4,2) {};

\draw[fill=gray!30] (0+6,0) -- (0+6,4) -- (1+6,0) -- cycle;
\node[draw,circle,inner sep=1pt,fill] at (0+6,0) {};
\node[draw,circle,inner sep=1pt,fill] at (0+6,4) {};
\node[draw,circle,inner sep=1pt,fill] at (1+6,0) {};
\node[draw,circle,inner sep=2pt] at (1+6,0) {};

\draw[fill=gray!30] (0+8,0) -- (0+8,4) -- (1+8,1) -- (1+8,0) -- cycle;
\node[draw,circle,inner sep=1pt,fill] at (0+8,0) {};
\node[draw,circle,inner sep=1pt,fill] at (0+8,4) {};
\node[draw,circle,inner sep=1pt,fill] at (1+8,0) {};
\node[draw,circle,inner sep=1pt,fill] at (1+8,1) {};

\draw[fill=gray!30] (0+10,0) -- (0+10,4) -- (1+10,2) -- (1+10,0) -- cycle;
\node[draw,circle,inner sep=1pt,fill] at (0+10,0) {};
\node[draw,circle,inner sep=1pt,fill] at (0+10,4) {};
\node[draw,circle,inner sep=1pt,fill] at (1+10,0) {};
\node[draw,circle,inner sep=1pt,fill] at (1+10,2) {};

\draw[fill=gray!30] (0+12,0) -- (0+12,4) -- (1+12,3) -- (1+12,0) -- cycle;
\node[draw,circle,inner sep=1pt,fill] at (0+12,0) {};
\node[draw,circle,inner sep=1pt,fill] at (0+12,4) {};
\node[draw,circle,inner sep=1pt,fill] at (1+12,0) {};
\node[draw,circle,inner sep=1pt,fill] at (1+12,3) {};

\draw[fill=gray!30] (0+14,0) -- (0+14,4) -- (1+14,4) -- (1+14,0) -- cycle;
\node[draw,circle,inner sep=1pt,fill] at (0+14,0) {};
\node[draw,circle,inner sep=1pt,fill] at (0+14,4) {};
\node[draw,circle,inner sep=1pt,fill] at (1+14,0) {};
\node[draw,circle,inner sep=1pt,fill] at (1+14,4) {};

\foreach \x in {-1,...,16}{
    \foreach \y in {-1,...,5}
        \node[cross=1.5pt] at (\x,\y) {};
}
\end{tikzpicture}
    \caption{All 4-point sets in \(\ZZ^2\) with multi-width \((1,4)\) up to affine equivalence and their convex hulls. Two points with the same coordinates are denoted by a circled dot.}
    \label{fig:4-pt_sets}
\end{figure}

First we classify all four-point sets in \(\ZZ\) of width \(w_1\).
\begin{proposition}\label{prop:marked_lines_class}
There is a bijection from the collection of lattice points in the triangle \(Q_{w_1} \coloneqq \conv((0,0),(0,w_1),(\frac{w_1}{2},\frac{w_1}{2}))\) to the set of the equivalence classes of the four-point sets in \(\ZZ\) with width \(w_1\).
It is given by the map taking \((x_1,x_2)\) to \(\{0,x_1,x_2,w_1\}\).
In particular, the number of such sets up to equivalence is
\[
\begin{cases}
\frac{w_1^2}{4} + w_1 + 1 & \text{if \(w_1\) is even}\\
\frac{w_1^2}{4} + w_1 + \frac34 & \text{if \(w_1\) is odd.}
\end{cases}
\]
\end{proposition}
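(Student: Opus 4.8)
The plan is to exhibit the stated map as a bijection by identifying the triangle \(Q_{w_1}\) with a fundamental domain for affine equivalence acting on width-\(w_1\) four-point sets, and then to count its lattice points directly.

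First I would normalise. The affine unimodular group of \(\ZZ\) consists of the translations together with the reflection \(x \mapsto -x\), so any four-point multi-set of width \(w_1\) can be translated so that its minimum is \(0\) and its maximum is \(w_1\); writing the two middle points as \(x_1 \leq x_2\) presents the set as \(\{0,x_1,x_2,w_1\}\) with \(0 \leq x_1 \leq x_2 \leq w_1\). The only residual symmetry preserving this normal form is the reflection \(x \mapsto w_1-x\), which acts on the middle pair by \((x_1,x_2) \mapsto (w_1-x_2,w_1-x_1)\). To see the map is well-defined I would note that a lattice point \((x_1,x_2) \in Q_{w_1}\) satisfies \(0 \leq x_1 \leq x_2\) and \(x_1+x_2 \leq w_1\); the latter forces \(x_2 \leq w_1\), so \(\{0,x_1,x_2,w_1\} \subseteq [0,w_1]\) has width exactly \(w_1\).

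Next I would establish the bijection. For surjectivity, given any normalised set, if \(x_1+x_2 > w_1\) I replace it by its reflection; since reflection sends the sum \(x_1+x_2\) to \(2w_1-(x_1+x_2)\) and preserves the ordering \(x_1 \leq x_2\), the result lies in \(Q_{w_1}\). For injectivity, if two lattice points of \(Q_{w_1}\) give affine-equivalent sets then, the minimum and maximum being pinned to \(0\) and \(w_1\), the equivalence is either the identity \(x \mapsto x\) or the reflection \(x \mapsto w_1-x\); in the latter case comparing the sums, which both must be at most \(w_1\), forces \(x_1+x_2 = w_1\), which lies on the edge where a set coincides with its own reflection, so the two points are in fact equal. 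I expect this boundary analysis to be the main obstacle: one must confirm that the triangle meets each equivalence orbit exactly once, with the self-symmetric sets sitting precisely on the hypotenuse \(x_1+x_2 = w_1\) so that they are neither omitted nor double-counted.

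Finally I would count the lattice points of \(Q_{w_1}\). Summing over \(x_1 = k\) with \(0 \leq k \leq \lfloor w_1/2 \rfloor\), the coordinate \(x_2\) ranges over the \(w_1-2k+1\) integers in \([k,w_1-k]\), giving
\[
\sum_{k=0}^{\lfloor w_1/2 \rfloor} (w_1 - 2k + 1).
\]
Evaluating this arithmetic progression and substituting \(w_1 = 2m\) or \(w_1 = 2m+1\) yields \((m+1)^2\) and \((m+1)(m+2)\) respectively, which rearrange into the claimed closed form. This last step is a routine computation.
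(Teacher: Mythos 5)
Your proposal is correct and follows essentially the same route as the paper: normalise the endpoints to \(0\) and \(w_1\), order the middle pair, use the reflection \(x \mapsto w_1 - x\) to land in \(Q_{w_1}\), and prove injectivity by noting the reflection is the only non-trivial affine automorphism of the segment (your sum comparison \(x_1+x_2 = w_1\) is just a repackaging of the paper's inequality chase \(x_1 = w_1 - x_2' \geq x_1'\), \(x_1' = w_1 - x_2 \geq x_1\)). The only difference is that you carry out the lattice-point count of \(Q_{w_1}\) explicitly, which the paper leaves as a routine verification.
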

\begin{proof}
The map \((x_1,x_2) \mapsto \{0,x_1,x_2,w_1\}\) is a well-defined map taking a lattice point of \(Q_{w_1}\) to a four-point set of width \(w_1\).
For surjectivity notice that the convex hull of any four-point set of width \(w_1\) is equivalent to \(\conv(0,w_1)\).
Therefore, we may assume that \(0\) and \(w_1\) are points in such a set and that \(x_1,x_2 \in [0,w_1]\) are the two remaining points.
By relabeling of the \(x_i\) we may assume that \(x_1 \leq x_2\).
A reflection takes \(\{0,x_1,x_2,w_1\}\) to \(\{0,w_1-x_2,w_1-x_1,w_1\}\) so we may assume that \(x_1 \leq w_1-x_2\).
This shows that \((x_1,x_2) \in Q_{w_1}\).

For injectivity let \((x_1,x_2)\) and \((x_1',x_2')\) be lattice points in \(Q_{w_1}\) such that \(\{0,x_1,x_2,w_1\}\) is equivalent to \(\{0,x_1',x_2',w_1\}\).
The only non-trivial affine automorphism of a line segment in \(\ZZ\) is the reflection about its midpoint so either \((x_1,x_2)=(x_1',x_2')\) or \((x_1,x_2) = (w_1-x_2',w_1-x_1')\).
In the first case we are done.
In the second case notice that \(x_1 = w_1-x_2' \geq x_1'\) and \(x_1' = w_1-x_2 \geq x_1\) so \(x_1=x_1'\).
Similarly \(x_2=x_2'\) which proves the result.

The counting can be seen by counting points in vertical lines of lattice points in \(Q_{w_1}\).
Thus there are \((w_1+1) + (w_1-1) + \dots + 1\) points in total if \(w_1\) is even and \((w_1+1) + (w_1-1) + \dots + 2\) if \(w_1\) is odd.
These simplify to the given formulas.
\end{proof}
We now move on to four-point sets in \(\ZZ^2\) with multi-width \((w_1,w_2)\).
If a multi-width \((w_1,\dots,w_d)\) polytope is a subset of a \(w_1 \times \dots \times w_d\) box it must have a vertex in each facet of this box otherwise it would have smaller multi-width.
Therefore, a multi-width \((w_1,w_2)\) four-point set which is a subset of \([0,w_1]\times[0,w_2]\) has \(x\)-coordinates equivalent to one of the above classified sets.
We restrict to the case where the corresponding point of \(Q_{w_1}\) is either \((0,0)\) or \((0,w_1)\) since this is sufficient to classify all multi-width \((1,w_2)\) four-point sets in \(\ZZ^2\).
We additionally assume that \(w_1<w_2\), since the four-point sets with multi-width \((1,1)\) are easy to identify and for \(w_1>1\) classifying the multi-width \((w_1,w_1)\) four-point sets adds unnecessary complexity to the proofs.
\begin{proposition}\label{prop:width1_quad_1}
    Let \(S\) be a four-point set in the plane with multi-width \((w_1,w_2)\) where \(0<w_1<w_2\).
    There is a dual vector \(u_1\) such that \(\width_{u_1}(S) = w_1\) and \(u_1 \cdot S\) is equivalent to \(\{0,0,0,w_1\}\) if and only if \(S\) is equivalent to exactly one of the following four-point sets:
    \begin{itemize}
        \item \(\{(0,0), (0,w_2),(0,y_0), (w_1,y_1)\}\) where \(0 \leq y_0 < \frac{w_2}{2}\) and \(0 \leq y_1 < w_1\),
        \item \(\{(0,0), (0,w_2), (0,\frac{w_2}{2}), (w_1,y_1)\}\) where \(0\leq y_1 \leq (w_2-y_1 \mod w_1)\) and \(w_2\) is even.
    \end{itemize}
\end{proposition}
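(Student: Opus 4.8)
The plan is to work in the normalized coordinates furnished by Proposition~\ref{prop:in_strip}. After an affine change of coordinates I may assume \(S \subseteq [0,w_1]\times[0,w_2]\) with the first two widths realized by \(e_1^\ast = (1,0)\) and \(e_2^\ast = (0,1)\). The hypothesis \(u_1 \cdot S \sim \{0,0,0,w_1\}\) (equivalently, the \(x\)-coordinate data corresponds to the point \((0,0)\) of \(Q_{w_1}\) in Proposition~\ref{prop:marked_lines_class}) means that, in these coordinates, three of the four points lie on the line \(x=0\) and the fourth lies on \(x=w_1\), so I can write \(S = \{(0,a),(0,b),(0,c),(w_1,d)\}\) with \(a \le b \le c\). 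The affine maps preserving this shape are generated by translations in \(y\), the shears \(\sigma_m\colon (x,y)\mapsto(x,y+mx)\), the reflection \((x,y)\mapsto(x,-y)\), and permutations of the three collinear points; I will use these to reduce \(S\) to a canonical form.

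The key step is to show that the three collinear points span the full second width, that is \(c-a=w_2\). Since \(\{(0,a),(0,b),(0,c)\}\subseteq S\), the subset bound gives \(c-a \le \width_{e_2^\ast}(S) = w_2\). For the reverse inequality I exploit the shears: applying \(\sigma_m\) sends \(d \mapsto d+mw_1\) and fixes the collinear points, and \(\width_{e_2^\ast}(\sigma_m(S)) = \width_{(-m,1)}(S)\). As \((-m,1)\) is linearly independent from \(e_1^\ast\), minimality of the second width forces \(\max(c,d+mw_1)-\min(a,d+mw_1) \ge w_2\) for every \(m \in \ZZ\). Minimizing the left-hand side over \(m\), which amounts to choosing the representative of \(d\) modulo \(w_1\) closest to the interval \([a,c]\), produces a value strictly below \(w_2\) as soon as \(c-a<w_2\), a contradiction. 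Hence \(c-a=w_2\), and after a translation I may take \(a=0\), \(c=w_2\). A further shear brings \(d\) into \([0,w_1)\) and the reflection \(y\mapsto w_2-y\) brings the middle coordinate into \(b \in [0,\tfrac{w_2}{2}]\).

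For the converse I check directly that every set of the listed shape has multi-width \((w_1,w_2)\) and satisfies the projection condition. This is easy: the three points \((0,0),(0,y_0),(0,w_2)\) already have \(y\)-extent \(w_2\), so in any direction \((p,q)\) with \(q\neq 0\) the width of \(S\) is at least \(|q|\,w_2 \ge w_2\); thus the width is at least \(w_2\) in every direction not parallel to \(e_1^\ast\), whereas \(\width_{e_1^\ast}(S)=w_1\). Consequently \(\width^1(S)=w_1\) is realized only by \(e_1^\ast\) (giving the required \(\{0,0,0,w_1\}\)) and \(\width^2(S)=w_2\) is realized by \(e_2^\ast\), for all admissible \(y_0,y_1\).

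It then remains to pin down the exact parameter ranges and the count, which is where the bookkeeping is most delicate. Writing \(b=y_0\) and \(d=y_1\), I will show that the only affine equivalence between two normal forms is the involution \((b,d)\mapsto(w_2-b,\ (w_2-d)\bmod w_1)\) induced by the reflection (the \(x\)-reflection \(x\mapsto w_1-x\) turns out to act trivially after renormalization). When \(b<\tfrac{w_2}{2}\) this involution never fixes the middle coordinate, so each pair with \(0\le y_0<\tfrac{w_2}{2}\) and \(0\le y_1<w_1\) is a distinct class; this first family contributes \(w_1\cdot\#\{b : 0\le b<\tfrac{w_2}{2}\}\), equal to \(\tfrac{w_1 w_2}{2}\) for \(w_2\) even and \(\tfrac{w_1(w_2+1)}{2}\) for \(w_2\) odd. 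When \(b=\tfrac{w_2}{2}\) (possible only for \(w_2\) even) the involution fixes \(b\) and acts on \(d\in[0,w_1)\) by \(d\mapsto(w_2-d)\bmod w_1\); the stated condition \(y_1 \le (w_2-y_1\bmod w_1)\) selects one representative per orbit, and counting orbits through the fixed-point equation \(2d\equiv w_2\pmod{w_1}\) gives \(\lceil\tfrac{w_1+1}{2}\rceil\) classes. Summing the two families yields the claimed totals. I expect the main obstacle to be precisely this last paragraph: proving that the partition of \(S\) into its collinear triple and its off point is a genuine affine invariant — so that the normal form is well defined and the involution above is the full stabilizer — and handling the degenerate multisets (such as \(y_0=0\) or \(y_1=0\), where a point is repeated and a second line through three points of the multiset appears), which must be treated separately to guarantee injectivity.
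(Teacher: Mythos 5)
Your proposal is correct and follows essentially the same route as the paper: normalise into \([0,w_1]\times[0,w_2]\) via Proposition~\ref{prop:in_strip}, use that \(w_1<w_2\) makes the width-\(w_1\) direction unique up to sign, force \(\{0,w_2\}\) among the \(y\)-coordinates of the collinear triple by the shear argument (the paper states this in one line, you spell it out), reduce parameters by shear and reflection, and obtain uniqueness and the count from the same involution \((y_0,y_1)\mapsto(w_2-y_0,\,(w_2-y_1 \bmod w_1))\). The ``main obstacle'' you flag at the end is in fact already resolved by an observation you made earlier: since \(u_1=(1,0)\) is unique up to sign, the triple/off-point partition is read off from the multiset of \(u_1\)-values \(\{0,0,0,w_1\}\) rather than from any collinearity in the plane, so degenerate multisets such as \(y_0=0\) or \(y_1=0\) require no separate treatment, and the paper accordingly dispatches uniqueness in one line by noting the only admissible maps are shears about the \(y\)-axis and the reflection in \(y=\frac{w_2}{2}\).
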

\begin{proof}
First we show that the listed four-point sets have multi-width \((w_1,w_2)\).
It is enough to notice that in either case if \(u=(u_x,u_y)\) is a dual vector with \(u_y \neq 0\), then
\[
\width_u(S) \geq |u\cdot (0,w_2) - u \cdot(0,0)| = |u_yw_2| \geq w_2.
\]
The image of these sets under \(u_1=(1,0)\) is \(\{0,0,0,w_1\}\) which proves the implication in one direction.

Next we show that all four-point sets \(S\) with multi-width \((w_1,w_2)\) and a dual vector \(u_1\) such that \(u_1 \cdot S\) is equivalent to \(\{0,0,0,w_1\}\) are equivalent to one of the two given cases.
Let \(S\) be such a set, then by Proposition~\ref{prop:in_strip} we may assume it is a subset of \([0,w_1] \times [0,w_2]\).
Since \(w_1<w_2\) the direction in which \(S\) has width \(w_1\) is unique up to sign so \(u_1=\pm(1,0)\).
Under \(u_1\) points of \(S\) are mapped to (possibly \(-1\) times) their \(x\)-coordinates so the only way for these to map to something equivalent to \(\{0,0,0,w_1\}\) is to have three points of \(S\) on one vertical edge of the rectangle and the fourth point on the other vertical edge.
Therefore, possibly after the reflection \((x,y) \mapsto (w_1-x,y)\), we may assume that \(S\) contains three points with \(x\)-coordinate \(0\) and one with \(x\)-coordinate \(w_1\).
Also, \(S\) must contain \((0,0)\) and \((0,w_2)\) otherwise, by a shear \((x,y) \mapsto (x,y-kx)\) for some integer \(k\), \(S\) is equivalent a subset of a smaller rectangle which contradicts the widths.
Therefore, we assume that \(S=\{(0,0),(0,w_2),(0,y_0),(w_1,y_1)\}\).

By the reflection \((x,y) \mapsto (x,w_2-y)\) we may assume that \(0 \leq y_0 \leq \frac{w_2}{2}\).
By a shear \((x,y) \mapsto (x,y-kx)\) for some integer \(k\) we may assume that \(0 \leq y_1 < w_1\).
If \(y_0 = \frac{w_2}{2}\) and \(y_1 > (w_2-y_1 \mod w_1)\) then we can make the \(y\)-coordinate of the vertex on \(x=w_1\) smaller by a reflection in the line \(y=\frac{w_2}{2}\) followed by a shear.
In more precise terms, pick \(k\) such that \(w_2-y_1 -kx = (w_2-y_1 \mod w_1)\) then the reflection and shear \((x,y) \mapsto (x,w_2-y-kx)\) takes \(S\) to one of the given four-point sets.
This proves that \(S\) is equivalent to a set of one of the given forms.

Finally we show that the four-points sets in the two cases are unique.
Suppose
\[
S=\{(0,0),(0,w_2),(0,y_0),(w_1,y_1)\} \sim \{(0,0),(0,w_2),(0,y_0'),(w_1,y_1')\} = S'
\]
where \(S\) and \(S'\) are each of either of the forms from the proposition.
We will show that \(S\) and \(S'\) are equal.
We can think of these sets as their convex hulls, which are triangles, with a marked point.
Since \(w_2>w_1\), considering the lattice length of line segments (i.e. the number of lattice points they contain minus 1) in \([0,w_1]\times[0,w_2]\), we see that the edge from \((0,0)\) to \((0,w_2)\) is the only edge of each triangle with lattice length \(w_2\).
Therefore, an affine map taking \(S\) to \(S'\) must map this edge back to itself.
This reduces us to shears \((x,y) \mapsto (x,y-kx)\) and the reflection followed by a shear \((x,y) \mapsto (w_2-y-kx)\) where \(k\) is an integer.
The images of \(S\) under such maps are
\begin{align*}
    &\{(0,0),(0,w_2),(0,y_0),(w_1,y_1-kw_1)\},\quad\text{and}\\ 
    &\{(0,0),(0,w_2),(0,w_2-y_0),(w_1,w_2-y_1-kw_1)\}
\end{align*}
Since \(0 \leq y_0,y_0' \leq \frac{w_2}{2}\) this shows that \(y_0=y_0'\).
The \(y\)-coordinate of the fourth points of these images must equal \(y_1'\) so, since \(0 \leq y_1' < w_1\), we must always choose \(k\) so that this \(y\)-coordinate is reduced modulo \(w_1\).
Since \(0 \leq y_1 < w_1\) if the shear takes \(S\) to \(S'\) this means that \(y_1=y_1'\) and \(S=S'\).
If instead the reflection followed by a shear takes \(S\) to \(S'\) then \(y_0' = w_2-y_0\) and since \(y_0=y_0'\) we have \(y_0=\frac{w_2}{2}\).
This means that \(y_1 \leq (w_2-y_1 \mod w_1) = y_1'\) and by the symmetric argument exchanging \(S\) and \(S'\) we have \(y_1' \leq (w_2-y_1' \mod w_1) = y_1\) so \(y_1=y_1'\) and \(S=S'\).
\end{proof}
\begin{proposition}\label{prop:width1_quad_2}
    Let \(S\) be a four-point set in the plane with multi-width \((w_1,w_2)\) where \(0<w_1<w_2\).
    There is a dual vector \(u_1\) such that \(\width_{u_1}(S) = w_1\) and \(u_1 \cdot S\) is equivalent to \(\{0,0,w_1,w_1\}\) if and only if \(S\) is equivalent to exactly one of the following four-point sets:
    \begin{itemize}
        \item \(\{(0,0), (0,w_2), (w_1,y_1), (w_1,y_2)\}\) where \(0 \leq y_1 \leq y_2 \leq w_2\) and \\\(y_1 \leq (w_2-y_2 \mod w_1)\)
        \item \(\{(0,0), (0,y_0), (w_1,y_1), (w_1,w_2)\}\) where \(\max\{w_2-y_1, w_2-(w_1-y_1)\}\leq y_0 < w_2\)
    \end{itemize}
\end{proposition}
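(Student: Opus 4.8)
The plan is to follow the structure of the proof of Proposition~\ref{prop:width1_quad_1}, while accounting for the fact that the \(x\)-coordinate pattern \(\{0,0,w_1,w_1\}\) has a larger symmetry group than \(\{0,0,0,w_1\}\): in addition to the shears about the \(y\)-axis and the reflection \(y \mapsto w_2 - y\), the reflection \(x \mapsto w_1 - x\) now preserves the pattern by swapping the two fibres \(x = 0\) and \(x = w_1\). This extra symmetry is what splits the classification into the two families.

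First I would verify the forward direction, that each listed set has multi-width \((w_1,w_2)\); the condition \(u_1 \cdot S \sim \{0,0,w_1,w_1\}\) is immediate with \(u_1 = (1,0)\). For the first family the presence of \((0,0)\) and \((0,w_2)\) gives \(\width_u(S) \geq |u_2| w_2 \geq w_2\) for every \(u=(u_1,u_2)\) with \(u_2 \neq 0\), exactly as in Proposition~\ref{prop:width1_quad_1}, so the first two widths are \(w_1 < w_2\). The second family is the genuinely new case, since its extreme heights \(0\) and \(w_2\) lie on different fibres, at \((0,0)\) and \((w_1,w_2)\), and the easy bound is unavailable. Here I would instead compute \(\width_u(S)\) for primitive transverse \(u\) directly: directions \((a,b)\) with \(|b|\geq 2\), together with \((a,1)\) for \(a \geq 0\), visibly give width at least \(w_2\), while for \(a \leq -1\) comparison of the vertices \((0,y_0)\) and \((w_1,y_1)\) gives \(\width_{(a,1)}(S) \geq y_0 - y_1 + |a|w_1\). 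Thus the only binding direction is \((-1,1)\), for which \(\width_{(-1,1)}(S) = y_0 - y_1 + w_1\) in the relevant range, and the hypothesis \(y_0 \geq w_2 - (w_1 - y_1)\) is exactly what forces this to be at least \(w_2\).

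Next I would show that an arbitrary \(S\) with the stated property is equivalent to a listed set. By Proposition~\ref{prop:in_strip} I may take \(S \subseteq [0,w_1]\times[0,w_2]\); as \(w_1 < w_2\) the width-\(w_1\) direction is unique, so \(u_1 = (1,0)\) and the hypothesis places two vertices on each of the lines \(x=0\) and \(x=w_1\). Since \(\width^2(S) = w_2\) and \(S\) lies in the strip \(0 \leq y \leq w_2\), both heights \(0\) and \(w_2\) are attained, and the dichotomy is whether some fibre carries a vertex of each extreme height. If one does, a reflection \(x \mapsto w_1 - x\) if needed puts \((0,0),(0,w_2) \in S\), and I would normalise the remaining pair \(\{(w_1,y_1),(w_1,y_2)\}\) by relabelling so that \(y_1 \leq y_2\), by an \(x\)-shear (which translates the pair together by multiples of \(w_1\)) and by \(y \mapsto w_2 - y\), reaching the first family and its residue condition \(y_1 \leq (w_2 - y_2 \bmod w_1)\), just as a single free point is normalised in Proposition~\ref{prop:width1_quad_1}. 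If no fibre carries both extremes, reflections place \((0,0)\) and \((w_1,w_2)\) as the two corners with free points \((0,y_0)\) and \((w_1,y_1)\) where \(0 < y_1 < w_1\); the multi-width forces \(y_0 \geq w_2 - (w_1-y_1)\), and applying the rotation described below if necessary I arrange \(y_0 \geq w_2 - y_1\) as well, giving the second family.

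Finally I would prove the uniqueness clause. The families are separated by the affine invariant ``some \(u_1\)-fibre carries vertices of both extreme heights'', so no set lies in both. Within the second family the only residual symmetry is the \(180^\circ\) rotation \(x \mapsto w_1-x\), \(y \mapsto w_2-y\), which acts on parameters by \((y_0,y_1) \mapsto (w_2-y_1, w_2-y_0)\); the inequality \(y_0 \geq w_2 - y_1\) breaks precisely this symmetry and selects a unique representative, after which distinct admissible \((y_0,y_1)\) give inequivalent sets. The first family is treated as in Proposition~\ref{prop:width1_quad_1}. I expect the main difficulty to be the bookkeeping forced by this enlarged symmetry group: disentangling which inequalities are imposed by the multi-width (the direction \((-1,1)\), giving \(y_0 \geq w_2-(w_1-y_1)\)) from those imposed by canonicality (the \(180^\circ\) rotation, giving \(y_0 \geq w_2 - y_1\)), and carefully treating the boundary configurations in which an extreme height is attained on both fibres, so that the two families neither overlap nor omit any class.
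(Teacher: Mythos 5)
Your proposal is correct and takes essentially the same route as the paper: the same reduction to the strip via Proposition~\ref{prop:in_strip}, the same dichotomy into the two families according to whether a fibre of \(u_1\) carries both extreme heights, the same key direction \((-1,1)\) forcing \(y_0 \geq w_2-(w_1-y_1)\) with the \(180^\circ\) rotation used to impose the canonicality condition \(y_0 \geq w_2-y_1\), and uniqueness via the lattice lengths of the vertical (fibre) edges of the convex hull. The only cosmetic difference is that you verify the second family's multi-width by a direct case analysis over dual vectors \((a,b)\), whereas the paper runs a single short contradiction argument covering all \(u\) with \(u_2>0\) at once; both amount to the same inequality.
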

\begin{proof}
First we show that the listed four-point sets have multi-width \((w_1,w_2)\).
The first case follows by the same proof as that in Proposition~\ref{prop:width1_quad_1} since it still contains \((0,0)\) and \((0,w_2)\).
In the second case, it suffices to show that for any dual vector \(u=(u_x,u_y)\) with \(u_y>0\), \(\width_u(S) \geq w_2\).
The image of \(S\) under \(u\) is 
\[
u \cdot S = \{0, u_yy_0, u_xw_1+u_yy_1, u_xw_1+u_yw_2\}.
\]
Suppose for contradiction that the width of \(S\) with respect to \(u\) is less than \(w_2\).
This means that the difference of any two elements in \(u \cdot S\) must be less than \(w_2\) so then \(u_xw_1+u_yw_2 < w_2\) which implies \(u_x < 0\).
Also, \(u_yy_0-u_xw_1-u_yy_1<w_2\) which we rearrange to show
\begin{align}\label{eqn:limit}
u_x > (u_yy_0-w_2-u_yy_1)/w_1.
\end{align}
By the conditions on \(y_0\) and \(y_1\) we have \(y_0-y_1 \geq w_2-w_1\) so \(u_y(y_0-y_1) \geq w_2-w_1\).
Combining this with \eqref{eqn:limit} shows that \(u_x >-1\) which is the desired contradiction.
Under \((1,0)\) these sets are taken to \(\{0,0,w_1,w_1\}\) which shows the implication in one direction.

Let \(S\) be a four-point set with multi-width \((w_1,w_2)\) with a dual vector \(u_1\) such that \(u_1\cdot S\) is equivalent to \(\{0,0,w_1,w_1\}\).
We will show that \(S\) is equivalent to one of the sets listed in the proposition.
By Proposition~\ref{prop:in_strip} we may assume this is a subset of \([0,w_1]\times [0,w_2]\).
Since \(w_1 < w_2\) the direction in which the first width is realised is unique up to sign so \(u_1=\pm(1,0)\).
This shows that \(S\) contains two points with \(x\)-coordinate \(0\) and two with \(x\)-coordinate \(w_1\).
Also, \(S\) must contain points with \(y\)-coordinates \(0\) and \(w_2\) or it would have smaller multi-width.
This means \((0,0)\) or \((w_1,0)\) is in \(S\) and \((0,w_2)\) or \((w_1,w_2)\) is in \(S\).
Possibly after applying the reflection \((x,y) \mapsto (w_1-x,y)\) we may assume that \((0,0) \in S\).
We may assume one of the following two possibilities: 
\begin{enumerate}
    \item[A.] \(S=\{(0,0),(0,w_2),(w_1,y_1),(w_1,y_2)\}\) where \(0 \leq y_1 \leq y_2 \leq w_2\),
    \item[B.] \(S=\{(0,0),(0,y_0),(w_1,y_1),(w_1,w_2)\}\) where \(0 \leq y_0<w_2\) and \(0<y_1\leq w_2\)
\end{enumerate}
where the additional inequalities come from relabeling the \(y_i\) and removing overlap between the cases.

In A we aim to minimise the \(y\)-coordinates of vertices on the line \(x=w_1\) and can do so either by a shear about the \(y\)-axis or by a reflection in the line \(y=\frac{w_2}{2}\) followed by such a shear.
We may assume by a shear that \(0 \leq y_1 <w_1\).
Consider the map given by \((x,y) \mapsto (x,w_2-y-kx)\) for the integer \(k\) such that \(w_2-y_2-kw_1 = (w_2-y_2\mod w_1)\).
This map is self inverse and takes \(S\) to \(\{(0,0),(0,w_2),(w_1,y_1'),(w_1,y_2')\}\) which is also of the form A and \(y_1' <w_1\).
Either \(y_1 \leq (w_2-y_2 \mod w_1) = y_1'\) or \(y_1' \leq (w_2-y_2' \mod w_1) = y_1\).
In either case \(S\) is equivalent to one of the sets listed in the proposition.

In B we can get a set of the same form by applying the map \((x,y) \mapsto (w_1-x,w_2-y)\) which replaces \(y_0\) and \(y_1\) with \(w_2-y_1\) and \(w_2-y_0\) respectively.
Therefore, we may assume that \(y_0 \geq w_2-y_1\).
Now consider the image \((-1,1) \cdot S =\{0,y_0,y_1-w_1,w_2-w_1\}\).
To prevent the width of \(S\) with respect to \((-1,1)\) being less that \(w_2\) the difference between some pair of elements of \((-1,1) \cdot S\) must be at least \(w_2\).
However, checking these differences case by case only \(y_0-(w_2-w_1)\) and \(y_0-(y_1-w_1)\) can be at least \(w_2\).
If \(y_0-(w_2-w_1) \geq w_2\) then definitely \(y_0-(y_1-w_1) \geq w_2\) so we may assume the latter holds.
Therefore, \(S\) is equivalent to one of the sets listed in the proposition.

Now we show that the sets listed in the proposition are distinct up to equivalence.
The two cases are distinct since the convex hull of the first case has an edge of lattice length \(w_2\) and the second does not.
Let \(S\) and \(S'\) be of the first form and suppose
\[
S=\{(0,0),(0,w_2),(w_1,y_1),(w_1,y_2)\} \sim \{(0,0),(0,w_2),(w_1,y_1'),(w_1,y_2')\}=S'.
\]
Either these are both equal to the vertices of \([0,w_1] \times [0,w_2]\) or their convex hulls each have exactly one edge of lattice length \(w_2\).
If they are equal we are done, otherwise the map taking \(S\) to \(S'\) must preserve the line segment from \((0,0)\) to \((0,w_2)\).
This reduces us to shears \((x,y) \mapsto (x,y-kx)\) and the reflection followed by a shear \((x,y) \mapsto (x,w_2-y-kx)\) for integers \(k\).
Since \(0 \leq y_1,y_1' <w_1\) if a shear maps \(S\) to \(S'\) then \(S=S'\).
If the reflection followed by a shear maps \(S\) to \(S'\) then \(y_1' = (w_2-y_2 \mod w_1) \geq y_1\) and symmetrically \(y_1 \geq y_1'\) so \(y_1=y_1'\).
Since the volume of the convex hulls of \(S\) and \(S'\) must be equal this shows that \(y_2=y_2'\) and \(S=S'\).

Now let \(S\) and \(S'\) be of the second form listed in the proposition and suppose
\[
S=\{(0,0),(0,y_0),(w_1,y_1),(w_1,w_2)\} \sim \{(0,0),(0,y_0'),(w_1,y_1'),(w_1,w_2)\}=S'.
\]
By the multi-width of these we know that \(\pm(1,0)\) are the only dual vectors under which they have width \(w_1\).
Therefore, a map taking the convex hull of \(S\) to the convex hull of \(S'\) must take edges with normal \((1,0)\) to edges with normal \((1,0)\).
Thus it suffices to consider the lengths of the vertical edges of the convex hulls of \(S\) and \(S'\).
By the conditions on \(S\) and \(S'\) their left-most vertical edge is at least as long as their right most vertical edge so \(y_0=y_0'\), \(y_1=y_1'\) and \(S=S'\).
\end{proof}

We use these propositions to classify the four-point sets with multi-width \((1,w_2)\).
\begin{corollary}\label{cor:width_1_fps}
Let \(S\) be a four-point set in \(\ZZ^2\) with multi-width \((1,w_2)\) then if \(w_2>1\) either \(S\) is equivalent to 
\begin{itemize}
    \item \(\{(0,0),(0,w_2),(0,y_0),(1,0)\}\) with \(y_0 \in [0,\frac{w_2}{2}]\) or
    \item \(\{(0,0),(0,w_2),(1,0),(1,y_1)\}\) with \(y_1 \in [0,w_2]\).
\end{itemize}
Counting the possible integers \(y_0\) and \(y_1\) shows that these are counted by
\[
\begin{cases}
\frac{3w_2}{2} + 2 & \text{if \(w_2\) even}\\
\frac{3w_2}{2} +\frac32 & \text{if \(w_2\) odd.}
\end{cases}
\]
If instead \(w_2=1\) then \(S\) is equivalent to 
\begin{itemize}
    \item \(\{(0,0),(0,1),(1,0),(1,1)\}\) or 
    \item \(\{(0,0),(0,0),(1,0),(0,1)\}\).
\end{itemize}
\end{corollary}

\section{Proof of Theorem~\ref{thm:main}}
\label{sec:tet}

In this section we prove Theorem~\ref{thm:main}, that is we show that the map taking a tetrahedron to its affine equivalence class defines a bijection from the set of tetrahedra \(\cS_{1,w_2,w_3}\) to the set \(\cT_{1,w_2,w_3}\) of tetrahedra of multi-width \((1,w_2,w_3)\) up to affine equivalence.

We begin with a preparatory lemma towards surjectivity.
\begin{lemma}\label{lem:existence}
Let \(T\) be a lattice tetrahedron with multi-width \((1,w_2,w_3)\).
Then there exists a tetrahedron \(T'\) of type~\ref{item:tet_0001}, \ref{item:tet_0011_0}, \ref{item:tet_0011_1} or \ref{item:tet_0011_2}, as in Definition~\ref{def:tet_class}, which is equivalent to \(T\).
\end{lemma}
\begin{proof}
By Proposition~\ref{prop:containing_rectangle} we may assume that \(T\) is a subset of \([0,1] \times [0,w_2] \times [0,w_3]\).
The vertices of \(T\) can only be in the planes \(x=0\) and \(x=1\) so, possibly after a reflection, we may assume the \(x\)-coordinates of the vertices of \(T\) are either \(\{0,0,0,1\}\) or \(\{0,0,1,1\}\).
We will show that
\begin{itemize}
    \item If the \(x\)-coordinates of \(T\) are \(\{0,0,0,1\}\) then \(T\) is equivalent to a type~\ref{item:tet_0001} tetrahedron,
    \item If the \(x\)-coordinates of \(T\) are \(\{0,0,1,1\}\) then \(T\) is equivalent to a type~\ref{item:tet_0011_0}, \ref{item:tet_0011_1} or \ref{item:tet_0011_2} tetrahedron.
\end{itemize}

If the \(x\)-coordinates are \(\{0,0,0,1\}\) then \(T\) is the convex hull of a triangle embedded in the plane \(x=0\) and a point with \(x\)-coordinate 1.
For integers \(k_1\) and \(k_2\), the shears \((x,y,z) \mapsto (x,y+k_1x,z+k_2x)\), can take the vertex with \(x\)-coordinate \(1\) to any lattice point with \(x\)-coordinate \(1\) without changing the triangle in the plane \(x=0\).
Therefore, we may assume that, under the projection onto the last two coordinates, \(T\) is mapped to the triangle.
The triangle is a subset of a \(w_2 \times w_3\) rectangle so its multi-width is at most \((w_2,w_3)\).
If it had multi-width lexicographically smaller than \((w_2,w_3)\) there would be dual vectors \(u_2'\) and \(u_3'\) linearly independent from \(u_1=(1,0,0)\) such that \((1,\width_{u_2'}(T),\width_{u_3'}(T)) <_{lex} (1,w_2,w_3)\) which is a contradiction.
Therefore, this triangle has multi-width \((w_2,w_3)\).
By \cite[Theorem 1.1]{triangles} we can assume the triangle is in \(\cS_{w_2,w_3}\).
Then by another shear of the same form we can move the fourth vertex to \((1,0,0)\) which proves that \(T\) is equivalent to a tetrahedron of the form~\ref{item:tet_0001}.

If the \(x\)-coordinates are \(\{0,0,1,1\}\) we need to consider the four-point set we get by projecting the vertices of \(T\) onto the first two coordinates.
This must have multi-width \((1,w_2)\) otherwise \(T\) has smaller multi-width.
By Corollary~\ref{cor:width_1_fps} the set is equivalent to one of the sets \(\{(0,0),(0,w_2),(1,0),(1,y_1)\}\) for an integer \(y_1 \in [0,w_2]\) and by an affine map on \(T\) we may assume they are equal.
It remains to determine the \(z\)-coordinates of the vertices of \(T\).
These are integers in \([0,w_3]\).
At least one of them must equal \(0\) and one \(w_3\) otherwise \(T\) has width less than \(w_3\) with respect to \((0,0,1)\) contradicting the multi-width.

There are 12 ways to assign \(0\) and \(w_3\) to two of the vertices.
See Figure~\ref{fig:12_cases} for the full list.
By a reflection in the plane \(z=\frac{w_3}{2}\), as depicted in Figure~\ref{fig:case_reduction}, we may swap which vertices are assigned \(0\) and \(w_3\).
In this way we see that \subref{fig:g}-\subref{fig:l} are equivalent to \subref{fig:a}-\subref{fig:f} so we disregard the last 6 cases.

We are left with cases \subref{fig:a}-\subref{fig:f}.
By a reflection in the plane \(y=\frac{w_2}{2}\) followed by the shear \((x,y,z) \mapsto (x,y-(w_2-y_1)x,z)\), as depicted in Figure~\ref{fig:case_reduction}, we can swap the \(z\)-coordinates assigned to the lower two vertices with those assigned to the upper two vertices.
In this way we see that \subref{fig:d} and \subref{fig:e} are equivalent to \subref{fig:c} and \subref{fig:b} respectively so we may disregard \subref{fig:d} and \subref{fig:e} also.

We are left with cases \subref{fig:a}, \subref{fig:b}, \subref{fig:c} and \subref{fig:f}.
In case \subref{fig:a}, after a shear about the plane \(x=0\), we may assume that \(z_1=w_3\) or \(z_2=w_3\).
Therefore, \subref{fig:a} is included in case \subref{fig:b} or \subref{fig:c} and we may disregard \subref{fig:a}.
Similarly, in case \subref{fig:f}, after a shear about the plane \(x=1\) we may assume that \(z_1=0\) or \(z_2=0\).
Therefore, \subref{fig:f} is included in case \subref{fig:c} or case \subref{fig:e} and thus \subref{fig:b}.
We disregard \subref{fig:f} as a result.

We are left with only cases \subref{fig:b} and \subref{fig:c}.
We will show that we can also disregard case \subref{fig:c} by showing that its width is incorrect unless it is also equivalent to a type \subref{fig:b} tetrahedron.
In case \subref{fig:c}, if \(y_1=0\), \(z_1=0\) or \(z_2=w_3\) then these tetrahedra would be included in case \subref{fig:b} (or \subref{fig:e} and thus \subref{fig:b}) so we assume these three equalities are false.
\begin{figure}
\centering
\subfloat[]{\label{fig:a}
\begin{tikzpicture}[x=0.5cm,y=0.5cm]
\draw[fill=gray!40] (0,0)--(0,3)--(1,2)--(1,0) -- cycle;
\node[anchor=east] at (0,0) {0};
\node[anchor=east] at (0,3) {\(w_3\)};
\node[anchor=west] at (1,0) {\(z_1\)};
\node[anchor=west] at (1,2) {\(z_2\)};
\end{tikzpicture}}
\subfloat[]{\label{fig:b}
\begin{tikzpicture}[x=0.5cm,y=0.5cm]
\draw[fill=gray!40] (0,0)--(0,3)--(1,2)--(1,0) -- cycle;
\node[anchor=east] at (0,0) {0};
\node[anchor=east] at (0,3) {\(z_1\)};
\node[anchor=west] at (1,0) {\(w_3\)};
\node[anchor=west] at (1,2) {\(z_2\)};
\end{tikzpicture}}
\subfloat[]{\label{fig:c}
\begin{tikzpicture}[x=0.5cm,y=0.5cm]
\draw[fill=gray!40] (0,0)--(0,3)--(1,2)--(1,0) -- cycle;
\node[anchor=east] at (0,0) {0};
\node[anchor=east] at (0,3) {\(z_1\)};
\node[anchor=west] at (1,0) {\(z_2\)};
\node[anchor=west] at (1,2) {\(w_3\)};
\end{tikzpicture}}
\subfloat[]{\label{fig:d}
\begin{tikzpicture}[x=0.5cm,y=0.5cm]
\draw[fill=gray!40] (0,0)--(0,3)--(1,2)--(1,0) -- cycle;
\node[anchor=east] at (0,0) {\(z_1\)};
\node[anchor=east] at (0,3) {0};
\node[anchor=west] at (1,0) {\(w_3\)};
\node[anchor=west] at (1,2) {\(z_2\)};
\end{tikzpicture}}
\subfloat[]{\label{fig:e}
\begin{tikzpicture}[x=0.5cm,y=0.5cm]
\draw[fill=gray!40] (0,0)--(0,3)--(1,2)--(1,0) -- cycle;
\node[anchor=east] at (0,0) {\(z_1\)};
\node[anchor=east] at (0,3) {\(0\)};
\node[anchor=west] at (1,0) {\(z_2\)};
\node[anchor=west] at (1,2) {\(w_3\)};
\end{tikzpicture}}
\subfloat[]{\label{fig:f}
\begin{tikzpicture}[x=0.5cm,y=0.5cm]
\draw[fill=gray!40] (0,0)--(0,3)--(1,2)--(1,0) -- cycle;
\node[anchor=east] at (0,0) {\(z_1\)};
\node[anchor=east] at (0,3) {\(z_2\)};
\node[anchor=west] at (1,0) {0};
\node[anchor=west] at (1,2) {\(w_3\)};
\end{tikzpicture}}\\
\subfloat[]{\label{fig:g}
\begin{tikzpicture}[x=0.5cm,y=0.5cm]
\draw[fill=gray!40] (0,0)--(0,3)--(1,2)--(1,0) -- cycle;
\node[anchor=east] at (0,0) {\(w_3\)};
\node[anchor=east] at (0,3) {0};
\node[anchor=west] at (1,0) {\(z_1\)};
\node[anchor=west] at (1,2) {\(z_2\)};
\end{tikzpicture}}
\subfloat[]{\label{fig:h}
\begin{tikzpicture}[x=0.5cm,y=0.5cm]
\draw[fill=gray!40] (0,0)--(0,3)--(1,2)--(1,0) -- cycle;
\node[anchor=east] at (0,0) {\(w_3\)};
\node[anchor=east] at (0,3) {\(z_1\)};
\node[anchor=west] at (1,0) {0};
\node[anchor=west] at (1,2) {\(z_2\)};
\end{tikzpicture}}
\subfloat[]{\label{fig:i}
\begin{tikzpicture}[x=0.5cm,y=0.5cm]
\draw[fill=gray!40] (0,0)--(0,3)--(1,2)--(1,0) -- cycle;
\node[anchor=east] at (0,0) {\(w_3\)};
\node[anchor=east] at (0,3) {\(z_1\)};
\node[anchor=west] at (1,0) {\(z_2\)};
\node[anchor=west] at (1,2) {0};
\end{tikzpicture}}
\subfloat[]{\label{fig:j}
\begin{tikzpicture}[x=0.5cm,y=0.5cm]
\draw[fill=gray!40] (0,0)--(0,3)--(1,2)--(1,0) -- cycle;
\node[anchor=east] at (0,0) {\(z_1\)};
\node[anchor=east] at (0,3) {\(w_3\)};
\node[anchor=west] at (1,0) {0};
\node[anchor=west] at (1,2) {\(z_2\)};
\end{tikzpicture}}
\subfloat[]{\label{fig:k}
\begin{tikzpicture}[x=0.5cm,y=0.5cm]
\draw[fill=gray!40] (0,0)--(0,3)--(1,2)--(1,0) -- cycle;
\node[anchor=east] at (0,0) {\(z_1\)};
\node[anchor=east] at (0,3) {\(w_3\)};
\node[anchor=west] at (1,0) {\(z_2\)};
\node[anchor=west] at (1,2) {0};
\end{tikzpicture}}
\subfloat[]{\label{fig:l}
\begin{tikzpicture}[x=0.5cm,y=0.5cm]
\draw[fill=gray!40] (0,0)--(0,3)--(1,2)--(1,0) -- cycle;
\node[anchor=east] at (0,0) {\(z_1\)};
\node[anchor=east] at (0,3) {\(z_2\)};
\node[anchor=west] at (1,0) {\(w_3\)};
\node[anchor=west] at (1,2) {0};
\end{tikzpicture}}
    \caption{Image \(\conv((0,0),(0,w_2),(1,0),(1,y_1))\) of a tetrahedron under projection onto the first two coordinates. Labels denote the \(z\)-coordinates at each vertex. Numbers \(z_1\) and \(z_2\) are integers in the range \([0,w_3]\) and these twelve cases include all affine equivalence classes of tetrahedra with multi-width \((1,w_2,w_3)\) and \(x\)-coordinates \(\{0,0,1,1\}\) in \([0,1] \times [0,w_2] \times [0,w_3]\).}
    \label{fig:12_cases}
\end{figure}
\begin{figure}
\centering
\begin{tikzpicture}[x=0.75cm,y=0.75cm]
\draw[fill=gray!40] (0,0)--(0,3)--(1,2)--(1,0) -- cycle;
\node[anchor=east] at (0,0) {\(z_1\)};
\node[anchor=east] at (0,3) {\(z_2\)};
\node[anchor=west] at (1,0) {\(z_3\)};
\node[anchor=west] at (1,2) {\(z_4\)};

\draw[fill=gray!40] (0+5,0)--(0+5,3)--(1+5,2)--(1+5,0) -- cycle;
\node[anchor=east] at (0+5,0) {\(w_3-z_1\)};
\node[anchor=east] at (0+5,3) {\(w_3-z_2\)};
\node[anchor=west] at (1+5,0) {\(w_3-z_3\)};
\node[anchor=west] at (1+5,2) {\(w_3-z_4\)};

\draw[fill=gray!40] (0+10,0)--(0+10,3)--(1+10,2)--(1+10,0) -- cycle;
\node[anchor=east] at (0+10,0) {\(z_2\)};
\node[anchor=east] at (0+10,3) {\(z_1\)};
\node[anchor=west] at (1+10,0) {\(z_4\)};
\node[anchor=west] at (1+10,2) {\(z_3\)};
\end{tikzpicture}
    \caption{Image of \(T=\conv((0,0,z_1),(0,w_2,z_2),(1,0,z_3),(1,y_1,z_4))\) and two equivalent tetrahedra under projection onto the first two coordinates. Labels denote the \(z\)-coordinates at each vertex. The second tetrahedron is obtained from the first by a reflection in the plane \(z=\frac{w_3}{2}\). The third is obtained from the first by a reflection in the plane \(y=\frac{w_2}{2}\) followed by the shear \((x,y,z) \mapsto (x,y-(w_2-y_1)x,z)\).}
    \label{fig:case_reduction}
\end{figure}
The images of the vertices of \subref{fig:c} under \((-z_2,0,1)\), \((0,-1,1)\) and \((w_3-y_1,1,-1)\) are 
\[
\{0,z_1,z_2,w_3-z_2\},\quad \{0,z_1-w_2,z_2,w_3-y_1\} \quad \text{and} \quad \{0,w_2-z_1,w_3-y_1-z_2,0\}
\]
respectively.
If any of these is a subset of \([0,w_3)\) then \(T\) would have width less than \(w_3\) in some direction linearly independent to \((1,0,0)\) and \((0,1,0)\) which is a contradiction.
To prevent this all three of the following points must be true.
\begin{itemize}
    \item \(z_1=w_3\) or \(z_2=w_3\) or \(z_2=0\)
    \item \(z_1<w_2\) or \(z_2=w_3\) or \(y_1=0\)
    \item (\(w_2=w_3\) and \(z_1=0\)) or \(z_1>w_2\) or \(y_1=z_2=0\) or \(y_1+z_2>w_3\)
\end{itemize}
Eliminating options we have assumed are not true we see that it is impossible to satisfy all of these conditions at once.
Therefore, we may discard \subref{fig:c} entirely and assume that when the \(x\)-coordinates of our tetrahedron are \(\{0,0,1,1\}\) then it is equivalent to
\[
T = \conv((0,0,0),(0,w_2,z_1),(1,0,w_3),(1,y_1,z_2))
\]
for some integers \(z_1\) and \(z_2\) in \([0,w_3]\).
We will now refine this general tetrahedron into a type~\ref{item:tet_0011_0}, \ref{item:tet_0011_1} or \ref{item:tet_0011_2} tetrahedron considering the cases \(y_1=0\) and \(y_1> 0\) separately.

If \(y_1=0\) then the image of the vertices of \(T\) under \((-z_2,0,1)\) and \((-z_2,-1,1)\) are
\[
\{0,z_1,w_3-z_2,0\}, \quad \text{and} \quad \{0,z_1-w_2,w_3-z_2,0\}
\]
respectively.
Neither of these can be a subset of \([0,w_3)\) as this would contradict the widths of \(T\).
Therefore, \(z_2=0\) since otherwise we would need both \(z_1=w_3\) and \(z_1 <w_2\) which is false.
By a shear \((x,y,z) \mapsto (x,y,z-ky)\) and possibly the reflection in the plane \(y=\frac{w_2}{2}\) we may assume \(z_1 \leq (-z_1 \mod w_2)\) and so \(0 \leq z_1 \leq \frac{w_2}{2}\). This shows that \(T\) is of the form \ref{item:tet_0011_0}.

If instead \(y_1>0\) consider the images of the vertices of \(T\) under \((-z_2,0,1)\), \((-1,1,1)\) and \((-1,-1,1)\) which are
\[
\{0,z_1,w_3-z_2,0\},\quad \{0,w_2+z_1,w_3-1,y_1+z_2-1\}, \quad \text{and} \quad \{0,z_1-w_2,w_3-1,z_2-y_1-1\}
\]
respectively.
Again, none of these can be a subset of \([0,w_3)\) so all three of the following points must be true:
\begin{itemize}
    \item \(z_1=w_3\) or \(z_2=0\)
    \item \(w_2+z_1 \geq w_3\) or \(y_1+z_2-1 \geq w_3\)
    \item \(z_1 < w_2\) or \(z_2<y_1+1\).
\end{itemize}
To satisfy these we must have either \(z_1=w_3\) and \(z_2\leq y_1\) or \(z_2=0\) and \(z_1 \geq w_3-w_2\).
The tetrahedron when \(z_1=w_3\) and \(z_2=y_1\) is equivalent but not equal to that when \(z_2=0\) and \(z_1=w_3-w_2\) by the shear \((x,y,z) \mapsto (x,y,z-y)\).
Therefore, we may also assume that \(z_2<y_1\) to avoid duplicates.
This proves that \(T\) is of the form \ref{item:tet_0011_1} or \ref{item:tet_0011_2}.
\end{proof}

The following shows that the map taking a tetrahedron to its affine equivalence class gives a surjective map from \(\cS_{1,w_2,w_3}\) to \(\cT_{1,w_2,w_3}\).
\begin{proposition}\label{prop:existence}
Let \(T\) be a lattice tetrahedron with multi-width \((1,w_2,w_3)\).
Then there exists some \(T' \in \cS_{1,w_2,w_3}\) which is equivalent to \(T\).
\end{proposition}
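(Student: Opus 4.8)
The plan is to start from an arbitrary tetrahedron $T$ of multi-width $(1,w_2,w_3)$ and normalise it step by step until it is recognisably one of the tetrahedra in Definition~\ref{def:tet_class}. Since $\width^1(T)=1$, Proposition~\ref{prop:in_strip} lets me assume $T$ lies in $[0,1]\times[0,w_2]\times\RR$, so the projection of the four vertices onto the $x$-axis is a four-point set in $\ZZ$ of width $1$; by Proposition~\ref{prop:marked_lines_class} this set is either $\{0,0,0,1\}$ or $\{0,0,1,1\}$, giving two principal cases. The key reduction is that the projection of $T$ onto the first two coordinates is a four-point set in the plane of multi-width $(1,w_2)$, so I can invoke Proposition~\ref{prop:width1_quad_1} (for the $\{0,0,0,1\}$ case) and Proposition~\ref{prop:width1_quad_2} (for the $\{0,0,1,1\}$ case) to put the $xy$-shadow into one of the explicit normal forms already classified. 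This pins down the first two coordinates of all four vertices up to the residual symmetries of each shadow.

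With the $xy$-coordinates fixed, the remaining freedom is in the $z$-coordinates together with the shears $z\mapsto z+ax+by$ that fix the first two coordinates, plus the reflections allowed by the shadow's stabiliser. First I would use Proposition~\ref{prop:containing_rectangle} (and the refinement following it) to bound the third width, so that after a shear I may assume the $z$-coordinates lie in a bounded range and, crucially, that $\width^3=w_3$ is realised by $(0,0,1)$. The subcase where the $xy$-shadow has three collinear vertices (shadow $\{0,(0,y_0),(0,w_2),(1,y_1)\}$) is exactly where $T$ contains a triangle in the plane $x=0$ of multi-width $(w_2,w_3)$; applying Theorem~\ref{thm:triangles} to normalise that facet delivers case~\eqref{item:tet_0001}. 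The subcases arising from the shadow $\{0,0,1,1\}$, where the vertices split into two pairs over $x=0$ and $x=1$, produce cases~\eqref{item:tet_0011_0}, \eqref{item:tet_0011_1} and \eqref{item:tet_0011_2}, according to how many of the extreme $y$-values $0$ and $w_2$ are attained and by which vertices.

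In each subcase I would fix the third width in the direction $(0,0,1)$, use the $z$-shear to send one chosen vertex to $z=0$, and then exploit the stabiliser of the $xy$-shadow (the shear $z\mapsto z+\text{(integer)}\cdot x$ and the reflection in $y=\tfrac{w_2}{2}$ when $y_0=\tfrac{w_2}{2}$) to drive the free $z$-coordinate into the fundamental domain recorded in the definition, e.g.\ $0\le z_1\le\tfrac{w_2}{2}$ in case~\eqref{item:tet_0011_0} or $0<z_1<y_1<w_2$ in case~\eqref{item:tet_0011_2}. I would treat $w_3=w_2$ separately, since the coincidence of the second and third widths enlarges the symmetry group and forces the extra inequalities $y_1\le z_1$ and $z_1\le w_2-y_1$; the very small cases $w_2=1$ would be handled by direct enumeration to match $\cS_{1,1,w_3}$ and $\cS_{1,1,1}$. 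I expect the main obstacle to be the bookkeeping in the last subcase, case~\eqref{item:tet_0011_2}, where all four vertices have distinct $xy$-projections and the only remaining symmetries act simultaneously on the $y$- and $z$-coordinates; verifying that the inequalities $0<z_1<y_1<w_2$ genuinely cut out a fundamental domain (existence here, with uniqueness deferred to the injectivity argument) requires carefully checking that no further shear or reflection can reduce the coordinates, and that the width in every non-coordinate direction stays at least $w_3$ so the multi-width is not accidentally lowered.
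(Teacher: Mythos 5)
Your proposal is correct and follows essentially the same route as the paper's proof: box \(T\) using Propositions~\ref{prop:in_strip} and~\ref{prop:containing_rectangle}, split on whether the \(x\)-coordinates of the vertices are \(\{0,0,0,1\}\) or \(\{0,0,1,1\}\), settle the first case by shearing so that the triangle facet in the plane \(x=0\) realises multi-width \((w_2,w_3)\) and applying Theorem~\ref{thm:triangles}, settle the second via Proposition~\ref{prop:width1_quad_2} together with width inequalities in non-coordinate directions to constrain the \(z\)-coordinates, and treat \(w_3=w_2\) and \(w_2=1\) separately, exactly as the paper does. The only slip is minor: in the \(\{0,0,1,1\}\) case the subdivision into forms~\eqref{item:tet_0011_0}--\eqref{item:tet_0011_2} is governed by which vertices attain the extreme \(z\)-values \(0\) and \(w_3\) (and by whether \(y_1=0\)), not by how the extreme \(y\)-values are attained, since every admissible shadow \(\{(0,0),(0,w_2),(1,0),(1,y_1)\}\) already attains both; this does not affect the viability of your plan.
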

\begin{proof}
By Lemma~\ref{lem:existence} and the definition of \(\cS_{1,w_2,w_3}\), if \(1 < w_2 <w_3\) then we are done.

Now we consider the special cases.
If \(w_2=w_3\) and \(T\) is a tetrahedron of the form \ref{item:tet_0011_1} then the image of \(T\) under the map \((x,y,z) \mapsto (1-x,z_1-z+x(w_2-z_1),y)\) is
\[
\conv((0,0,0), (0,w_2,y_1), (1,0,w_2), (1,z_1,0)).
\]
If \(y_1 > z_1\) this is also of the form \ref{item:tet_0011_1} but with the roles of \(y_1\) and \(z_1\) swapped.
Therefore, to remove duplicates we may assume that \(y_1 \leq z_1\).
If \(w_2=w_3\) and \(T\) is a tetrahedron of the form \ref{item:tet_0011_2} then the image of \(T\) under the map \((x,y,z) \mapsto (x,w_2-z,w_2-y)\) is
\[
\conv((0,0,0),(0,w_2,w_2),(1,0,w_2),(1,w_2-z_1,w_2-y_1)).
\]
This is also of the form~\ref{item:tet_0011_2} and the map is self inverse so unless \(T\) is equal to its image we need to eliminate one of these tetrahedra to remove duplicates.
Our \(T\) is equal to its image only when \(z_1+y_1=w_2\) so we may assume that \(z_1 \leq w_2-y_1\).

When \(w_2=1\) substituting into tetrahedra~\ref{item:tet_0001}-\ref{item:tet_0011_2} and simplifying reduces us to the following cases:
\begin{itemize}
    \item \(\conv((0,0,0),(0,0,w_3),(0,1,0), (1,0,0))\)
    \item \(\conv((0,0,0),(0,1,0),(1,0,0),(1,0,w_3))\)
    \item \(\conv((0,0,0),(0,1,z_1),(1,0,w_3),(1,1,0))\) where \(z_1=w_3-1\) or \(z_1=w_3\).
\end{itemize}
Under \((x,y,z) \mapsto (1-x-y,y,z)\) the second of these maps to the first so we are left with the three tetrahedra appearing in \(\cS_{1,1,w_3}\).
Finally, when \(w_3=1\) two of these are equivalent to the convex hull of the empty triangle embedded in the plane \(x=0\) and the point \((1,0,0)\) so it reduces to the two cases in \(\cS_{1,1,1}\).
\end{proof}
The following proves that the map taking a tetrahedron to its affine equivalence class gives a map from \(\cS_{1,w_2,w_3}\) to \(\cT_{1,w_2,w_3}\).
\begin{proposition}\label{prop:tet_width_is}
Let \(T \in \cS_{1,w_2,w_3}\), then the multi-width of \(T\) is \((1,w_2,w_3)\).
\end{proposition}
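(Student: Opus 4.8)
The plan is to bound \(\width_u(T)\) from below for every primitive dual vector \(u=(u_1,u_2,u_3)\), and to exhibit directions realising the claimed tuple. Throughout I write \(e_1,e_2,e_3\) for the standard basis of \(N^*\) and first assume \(w_2>1\), treating the finitely many tetrahedra with \(w_2=1\) separately at the end. Since every \(T\in\cS_{1,w_2,w_3}\) is a genuine three-dimensional tetrahedron we have \(\width_u(T)\geq 1\) for all \(u\neq 0\), and a direct inspection of the vertex coordinates in each of the cases \eqref{item:tet_0001}--\eqref{item:tet_0011_2} shows \(\width_{e_1}(T)=1\), \(\width_{e_2}(T)=w_2\) and \(\width_{e_3}(T)=w_3\). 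As \(e_1,e_2,e_3\) are linearly independent this already gives \(\mwidth(T)\leq_{lex}(1,w_2,w_3)\), so the content of the proposition is the reverse inequality.

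The reverse inequality I would deduce from two claims about arbitrary primitive \(u\):
\begin{itemize}
    \item[(1)] if \(u_3=0\) and \(u\neq\pm e_1\), then \(\width_u(T)\geq w_2\);
    \item[(2)] if \(u_3\neq 0\), then \(\width_u(T)\geq w_3\).
\end{itemize}
Granting these, any \(u\) not parallel to \(e_1\) satisfies \(\width_u(T)\geq w_2\) (by (1) if \(u_3=0\), and by (2) together with \(w_3\geq w_2\) if \(u_3\neq 0\)). Because \(w_2\geq 2\), this forces every width-\(1\) direction to be \(\pm e_1\), so \(\width^1(T)=1\) and, since \(e_2\) attains \(w_2\), also \(\width^2(T)=w_2\). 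For the third width, when \(w_3>w_2\) any direction realising \(\width^2=w_2<w_3\) must, by the contrapositive of (2), have third coordinate \(0\); hence the plane spanned by the first two realising directions is exactly \(\langle e_1,e_2\rangle\), any independent third direction has nonzero third coordinate, and (2) gives \(\width^3(T)\geq w_3\), with \(e_3\) attaining equality. When \(w_3=w_2\) instead, \(\width^3(T)\geq\width^2(T)=w_2=w_3\) is automatic. Either way \(\mwidth(T)=(1,w_2,w_3)\).

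Claim (1) is the easy half: for \(u=(u_1,u_2,0)\) the value \(u\cdot x\) depends only on the first two coordinates, so \(\width_u(T)=\width_{(u_1,u_2)}(\pi_{12}(T))\), where \(\pi_{12}(T)\) is the four-point set obtained by projecting the vertices onto the first two coordinates. In every case this projection is one of the sets classified in Section~\ref{sec:quad} (of the form with \(x\)-coordinates \(\{0,0,0,1\}\) in case~\eqref{item:tet_0001} and \(\{0,0,1,1\}\) in cases~\eqref{item:tet_0011_0}--\eqref{item:tet_0011_2}), so by Proposition~\ref{prop:width1_quad_1} or Proposition~\ref{prop:width1_quad_2} it has multi-width \((1,w_2)\); as \(w_2>1\) its unique width-\(1\) direction is \((1,0)\), and any \((u_1,u_2)\) with \(u_2\neq 0\) therefore has planar width at least \(w_2\). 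I expect Claim (2) to be the main obstacle, since for \(u_3\neq 0\) no planar reduction is available and one must bound \(\width_u(T)=\max_{i,j}u\cdot(v_i-v_j)\) directly from the vertex coordinates. The argument here is a finite but genuine case analysis on the form \eqref{item:tet_0001}--\eqref{item:tet_0011_2} and on the signs of \(u_2,u_3\): one selects, for each sign pattern, a pair of vertices whose difference pairs favourably with \(u\), and invokes the defining constraints (for instance \(w_3-w_2\leq z_1\leq w_3\) in case~\eqref{item:tet_0011_1}, \(0<z_1<y_1<w_2\) in case~\eqref{item:tet_0011_2}, and \(w_3\geq w_2\) throughout) to force the resulting bound up to \(w_3\). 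The delicate subcases are those where \(u_2\) and \(u_3\) have opposite signs, where the lower bound comes from combining the maximal and minimal projected vertices rather than from a single edge.

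Finally I would dispose of \(w_2=1\) by hand. The sets \(\cS_{1,1,w_3}\) and \(\cS_{1,1,1}\) each consist of a fixed, short list of explicit tetrahedra, so for each one it suffices to check directly that two independent directions give width \(1\), that \(e_3\) (respectively each standard direction for \(\cS_{1,1,1}\)) realises the remaining entry, and that no independent direction beats these values; this is a bounded computation of the same flavour as Claim (2) but with no free parameters, and it completes the proof.
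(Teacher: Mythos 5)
Your global skeleton is sound and coincides with the paper's: the first width is pinned down because \(\pm e_1\) is the unique width-\(1\) direction once \(w_2>1\); your Claim (1) for \(u_3=0\) via projection onto the first two coordinates and the four-point set classification is exactly the paper's argument; and your lex-order bookkeeping deducing \(\mwidth(T)=(1,w_2,w_3)\) from Claims (1) and (2) is correct. The genuine gap is that Claim (2) --- which is the entire content of the proposition --- is never proved. You write that ``one selects, for each sign pattern, a pair of vertices whose difference pairs favourably with \(u\)'', but you carry out none of these selections, so what you have is a strategy, not a proof. The paper closes precisely this step with a different device: assuming \(\width_u(T)<w_3\) with \(u_3>0\), the shear construction from the proof of Proposition~\ref{prop:containing_rectangle} yields integers \(k_1,k_2\) such that \(\bigl(\begin{smallmatrix}1&0&0\\0&1&0\\k_1&k_2&1\end{smallmatrix}\bigr)\) maps \(T\) into a box of height less than \(w_3\) in the \((0,0,1)\)-direction; this replaces the quantification over all \(u\) with \(u_3\neq0\) by a two-integer-parameter family of sheared vertex lists, and in each of cases \eqref{item:tet_0011_0}--\eqref{item:tet_0011_2} two explicit \(z\)-coordinate inequalities are immediately incompatible (e.g.\ in case~\eqref{item:tet_0011_1} one needs \(w_3-k_2y_1<w_3\) and \(z_1+k_2w_2<w_3\), which contradict \(z_1\geq w_3-w_2\)).

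For what it is worth, your direct sign analysis does go through in cases \eqref{item:tet_0011_0}--\eqref{item:tet_0011_2}, but it must actually be written: in case~\eqref{item:tet_0011_0} the edge from \((1,0,0)\) to \((1,0,w_3)\) gives \(\width_u(T)\geq|u_3|w_3\geq w_3\) outright; in case~\eqref{item:tet_0011_1}, from \(|u_3w_3-u_2y_1|<w_3\) one forces \(u_2\geq1\), whence \(u_2w_2+u_3z_1\geq w_2+(w_3-w_2)=w_3\), a contradiction; in case~\eqref{item:tet_0011_2}, from \(|u_2w_2+u_3w_3|<w_3\) one forces \(u_2\leq-1\), whence \(u_3(w_3-z_1)-u_2y_1\geq(w_3-z_1)+y_1>w_3\) using \(y_1>z_1\). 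However, your stated plan cannot work as described in case~\eqref{item:tet_0001}: there the vertices of \(t\in\cS_{w_2,w_3}\) carry free parameters (\(y_1\), \(x_2\), \(y_0\)), so no fixed ``favourable pair of vertices'' exists, and one must instead invoke the multi-width of \(t\) itself --- restricting \(u\) to \(\{0\}\times t\) gives \(\width_u(T)\geq\width_{(u_2,u_3)}(t)\), and any planar direction with nonzero second coordinate has width at least \(w_3\), since otherwise the pair consisting of the width-\(w_2\) direction and this one would contradict \(\mwidth(t)=(w_2,w_3)\). This is exactly how the paper disposes of case~\eqref{item:tet_0001}. Finally, your separate hand-check for \(w_2=1\) is harmless but unnecessary: the paper observes that the tetrahedra in \(\cS_{1,1,w_3}\) and \(\cS_{1,1,1}\) are still of the forms \eqref{item:tet_0001}--\eqref{item:tet_0011_2}, so the general argument applies verbatim with \(w_3\geq w_2\geq1\).
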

\begin{proof}
By definition, the tetrahedra in \(\cS_{1,w_2,w_3}\) are always of one of the forms \ref{item:tet_0001}-\ref{item:tet_0011_2}.
Therefore, it suffices to show that a tetrahedron satisfying one of these conditions has multi-width \((1,w_2,w_3)\) for any \(w_3 \geq w_2 \geq 1\).
Let \(T\) be in one of these forms.
Lattice polytopes have integral widths and only have width zero in some direction if their dimension is less than that of the space they are in.
Since \(\width_{(1,0,0)}(T)=1\) and \(T\) has non-zero volume its first width is 1.
Furthermore, \(T\) has widths \(w_2\) and \(w_3\) realised by the dual vectors \((0,1,0)\) and \((0,0,1)\) respectively.

There are two ways in which the remaining two widths can fail.
Either there is a dual vector \(u\) linearly independent to \((1,0,0)\) such that \(\width_u(T) < w_2\) or there is a dual vector \(u\) linearly independent to \(\{(1,0,0),(0,1,0)\}\) such that \(\width_u(T) < w_3\).
To prove these do not occur it suffices to show that for all \(u=(u_x,u_y,0)\) with \(u_y \neq 0\) \(\width_u(T) \geq w_2\) and for all \(u=(u_x,u_y,u_z)\) with \(u_z \neq 0\) \(\width_u(T) \geq w_3\).

Let \(\pi\) be the projection onto the first two coordinates then
\[
\width_{(u_x,u_y,0)}(T) = \width_{(u_x,u_y)}(\pi(T)).
\]
However, the four-point set which is the image of the vertices of \(T\) under \(\pi\) has multi-width \((1,w_2)\) by Corollary~\ref{cor:width_1_fps}.
The first width of this set is realised by \((1,0)\) therefore, for all \(u=(u_x,u_y,0)\) with \(u_y \neq 0\), we have \(\width_u(T) = \width_{(u_x,u_y)}(\pi(T)) \geq w_2\).

Now suppose for contradiction there exists dual vector \(u=(u_x,u_y,u_z)\) with \(u_z\neq0\) is such that \(\width_u(T)<w_3\).
Without loss of generality we may assume that \(u_z >0\).
Then by the proof of Proposition~\ref{prop:containing_rectangle} a map of the form \((x,y,z) \mapsto (x,y,k_1x+k_2y+z)\) takes \(T\) to a subset of a \(1 \times w_2 \times \width_u(T)\) box for some integers \(k_1\) and \(k_2\).
The image of \(T\) under this map is one of the following corresponding to the form of \(T\).
\begin{enumerate}
    \item \(\conv(\{0\} \times \left(\begin{smallmatrix}1 & 0\\ k_2 & 1
    \end{smallmatrix}\right)t, (1,0,k_1))\) where \(t \in \cS_{w_2,w_3}\)
    \item \(\conv((0,0,0),(0,w_2,z_1+k_2w_2),(1,0,k_1),(1,0,w_3+k_1))\) where \(0 \leq z_1 \leq \frac{w_2}{2}\)
    \item \(\conv((0,0,0),(0,w_2,z_1+k_2w_2),(1,0,w_3+k_1),(1,y_1,k_1+k_2y_1))\) where \(0 < y_1 \leq w_2\) and \(w_3-w_2 \leq z_1 \leq w_3\),
    \item \(\conv((0,0,0),(0,w_2,w_3+k_2w_2),(1,0,w_3+k_1),(1,y_1,z_1+k_1+k_2y_1))\) where \(0 < y_1 < w_2\) and \(0 < z_1 < y_1\).
\end{enumerate}
We will show that it is impossible for any of these to have width less than \(w_3\) with respect to \((0,0,1)\).

1. Let \(\pi\) be the projection onto the last two coordinates then for a polytope \(P\) we have
\[
\width_{(0,0,1)}(P) = \width_{(0,1)}(\pi(P)).
\]
This means that \(\conv(\{0\} \times \left(\begin{smallmatrix}1 & 0\\ k_2 & 1 \end{smallmatrix}\right)t, (1,0,k_1))\) where \(t \in \cS_{w_2,w_3}\) never has width less than \(w_3\) with respect to \((0,0,1)\) thanks to the widths of \(t\).

2. The tetrahedron of the form~\ref{item:tet_0011_0} has width at least \(w_3\) with respect to \((0,0,1)\) due to the vertices \((1,0,k_1)\) and \((1,0,w_3+k_1)\).

3. In a tetrahedron of the form~\ref{item:tet_0011_1} if the width with respect to \((0,0,1)\) was less than \(w_3\) the difference between every pair of \(z\)-coordinates must be less than \(w_3\).
In particular we would need to have \(w_3+k_1-k_1-k_2y_1 < w_3\) and \(z_1+k_2w_2<w_3\).
The first of these implies that \(k_2>0\) which combines with the second to show that \(z_1+w_2 < w_3\).
However, \(z_1 \geq w_3-w_2\) which is a contradiction.

4. Similarly, in a tetrahedron of the form~\ref{item:tet_0011_2} we would need \(w_3+k_2w_2 < w_3\) and \(w_3+k_1-z_1-k_1-k_2y_1<w_3\).
The first of these implies that \(k_2<0\) and the second implies that \(k_2>-z_1/y_1>-1\) which is a contradiction.
\end{proof}

For the proof of Proposition~\ref{prop:distinct} we will need the following lemma.
\begin{lemma}\label{lem:y1_is_min}
    Let \(1 < w_2 \leq w_3\) and let \(T\) be a tetrahedron in \(\cS_{1,w_2,w_3}\) whose \(x\)-coordinates are \(\{0,0,1,1\}\).
    Suppose, the projection of \(T\) onto the first two coordinates is 
    \[
    \conv((0,0),(0,w_2),(1,0),(1,y_1)).
    \]
    Then, for any surjective lattice homomorphism \(\pi: \ZZ^3 \to\ZZ^2\), if \(\pi(T)\) is equivalent to 
    \[
    \conv((0,0),(0,w_2),(1,0),(1,y_1'))
    \]
    for some integer \(y_1' \in [0,w_2]\) then \(y_1' \geq y_1\).
    In other words, \(y_1\) is minimal.
\end{lemma}
\begin{proof}
For tetrahedra of the form~\ref{item:tet_0011_0} this is immediate since \(y_1=0\).

Let \(T\) be of the form~\ref{item:tet_0011_1} or \ref{item:tet_0011_2}.
Let \(\pi: \ZZ^3 \to \ZZ^2\) be a surjective lattice homomorphism such that \(\pi(T)\) is equivalent to \(\conv((0,0),(0,w_2),(1,0),(1,y_1'))\) for an integer \(y_1' \in [0,w_2]\).
Let \(P = (p_{ij})\) be the \(2 \times 3\) integral matrix defining \(\pi\).
By Proposition~\ref{prop:in_strip} there are dual vectors \(u_1\) and \(u_2 \in (\ZZ^2)^*\) which form a basis of \((\ZZ^2)^*\) and which realise the first two widths of \(\pi(T)\).
This means that \(\width_{u_i}(\pi(T)) = \width_{u_i \circ \pi}(T) = w_i\) for \(i=1\) and \(2\).
Since \(w_2>1\) the direction in which \(P\) has width \(1\) is unique so, possibly after changing the sign of \(u_1\), we have \(u_1P = (1,0,0)\).
Let \(U\) be the matrix with rows \(u_1\) and \(u_2\) then replace \(P\) with \(UP\) and change \(\pi\) accordingly.
In this way we can assume that the first row of \(P\) is \((1,0,0)\).
This does not alter our previous assumptions about \(\pi\) since this operation is a unimodular map in \(\ZZ^2\).

If \(w_2 < w_3\) a vector realising the second width of \(P\) must be of the form \((u_x,u_y,0)\) so the final entry of \(u_2P\) is \(0\).
This allows us to assume \(p_{23}=0\).
Now we have image
\[
    \pi(T) = \conv((0,0),(0,p_{22}w_2),(1,p_{21}),(1,p_{21}+p_{22}y_1)).
\]
This has an edge of lattice length \(|p_{22}w_2|\) so \(p_{22}\) must be \(0\) or \(\pm1\).
If \(p_{22}=0\) then \(\pi(T)\) is just a line segment, contradicting our assumptions on \(\pi\).
If \(p_{22}=\pm1\) then \(y_1' = |p_{22}y_1+p_{21}-p_{21}| = y_1\) so \(y_1' \geq y_1\) as desired.

It remains to consider the case \(w_3=w_2>1\).
By replacing \(P\) with \(\left(\begin{smallmatrix}
    1 & 0 \\ -p_{21} & 1
\end{smallmatrix}\right)P\) we may assume \(p_{21}=0\).
This leaves us with the following two cases corresponding to \ref{item:tet_0011_1} and \ref{item:tet_0011_2}
\[
\pi(T) = \conv((0,0),(0,p_{22}w_2+p_{23}z_1),(1,p_{23}w_2),(1,p_{22}y_1) \quad \text{or}
\]
\[
\pi(T) = \conv((0,0), (0,p_{22}w_2+p_{23}w_2), (1,p_{23}w_2), (1,p_{22}y_1+p_{23}z_1)).
\]
The dual vector \((1,0)\) is the unique dual vector realising the first width of both \(\pi(T)\) and \(\conv((0,0),(0,w_2),(1,0),(1,y_1'))\).
These quadrilaterals each have (at most) two facets with normal vector \((1,0)\) so these facets must be equivalent.
This means that one of the vertical edges of \(\pi(T)\) must have length \(w_2\) and the other must have length in \([0,w_2]\).
Therefore, to complete the proof it suffices to show that for each choice of vertical edge to be length \(w_2\) the other vertical edge must have length at least \(y_1\).

First consider the quadrilateral associated to case~\ref{item:tet_0011_1}.
If \(|p_{22}w_2+p_{23}z_1|=w_2\) then, after a possible change of sign of the second line of \(P\), we may assume that \(p_{23}=w_2(1-p_{22})/z_1\).
Then \(|p_{23}w_2-p_{22}y_1| = |w_2^2/z_1-p_{22}(w_2^2/z_1+y_1)|\).
This is the modulus of a linear function in \(p_{22}\) so to find the smallest values it takes we notice that it is zero when \(p_{22} = w_2^2/(w_2^2 + y_1z_1) \in [0,1]\). 
Since \(p_{22}\) is an integer the length is smallest when either \(p_{22}=0\) or \(1\) which gives values \(w_2^2/z_1\) and \(y_1\) respectively.
Both of these are at least \(y_1\) given the conditions on a tetrahedron of the form~\ref{item:tet_0011_1} when \(w_2=w_3\).

On the other hand, if \(|p_{23}w_2-p_{22}y_1| = w_2\) then as above we may assume that \(p_{22} = w_2(p_{23}-1)/y_1\).
Then \(|p_{22}w_2+p_{23}z_1| = |-w_2^2/y_1+p_{23}(w_2^2/y_1+z_1)|\) which is zero when \(p_{23} = w_2^2/(w_2^2+z_1y_1) \in [0,1]\).
Since \(p_{23}\) is an integer it is actually smallest at either \(w_2^2/z_1\) or \(z_1\) both of which are at least \(y_1\) given our assumptions.

Now for the quadrilateral associated to case~\ref{item:tet_0011_2}.
If \(|p_{22}w_2+p_{23}w_2| = w_2\) then as above we may assume that \(p_{22}+p_{23} = 1\).
Then \(|p_{23}(w_3-z_1)-p_{22}y_1| = |p_{23}(w_2-z_1+y_1)-y_1|\) which is zero when \(p_{23} = y_1/(w_2-z_1+y_1) \in [0,1]\).
Since \(p_{23}\) is an integer it is actually smallest at either \(w_2-z_1\) or \(y_1\) both of which are at least \(y_1\) given our assumptions.

On the other hand, if \(|p_{23}(w_2-z_1)-p_{22}y_1| = w_2\) then notice that \(|p_{22}w_2+p_{23}w_2| = |p_{22}+p_{23}|w_2\).
Since these are all integers this is at least \(y_1\) unless \(p_{23} = -p_{22}\).
However, then we would have \(w_2 = |p_{23}|(w_2-z_1+y_1)\). 
We can't let both \(p_{22}\) and \(p_{23}\) be zero so then \(w_2 \geq w_2-z_1+y_1 > w_2\) which is a contradiction.
\end{proof}

The following shows injectivity of the map taking a tetrahedron to its affine equivalence class from \(\cS_{1,w_2,w_3}\) to \(\cT_{1,w_2,w_3}\).
\begin{proposition}\label{prop:distinct}
Tetrahedra in \(\cS_{1,w_2,w_3}\) are distinct under affine unimodular maps.
\end{proposition}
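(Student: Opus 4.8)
The plan is to show that distinct members of $\cS_{1,w_2,w_3}$ lie in distinct affine equivalence classes by producing, for any two of them, an affine invariant that distinguishes them. The organising principle is that when $w_2 > 1$ we have $1 = \width^1 < \width^2$, so the direction realising the first width is unique up to sign; hence the linear part $A$ of any affine equivalence satisfies $A^{\mathrm{T}} u_1 = \pm u_1$ for $u_1 = (1,0,0)$, and in particular the multiset of $x$-coordinates of the vertices is an invariant. This multiset is $\{0,0,0,1\}$ for tetrahedra of type~\eqref{item:tet_0001} and $\{0,0,1,1\}$ for those of types~\eqref{item:tet_0011_0}--\eqref{item:tet_0011_2}, which separates the first family from the other three.

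Within type~\eqref{item:tet_0001} the unique vertex with $x$-coordinate $1$ is intrinsically distinguished, so any equivalence between two such tetrahedra fixes the fibre $x=0$ (up to the reflection $x \mapsto 1-x$) and restricts to an affine equivalence of the base triangles lying in that fibre. These bases are the elements $t \in \cS_{w_2,w_3}$, which are pairwise inequivalent by Theorem~\ref{thm:triangles}; hence distinct $t$ give distinct tetrahedra and this family produces no coincidences.

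For types~\eqref{item:tet_0011_0}--\eqref{item:tet_0011_2} I would first treat the generic case $w_2 < w_3$. The first row of $A$ is already forced to be $(\pm 1, 0, 0)$, leaving a block $B \in \GL_2(\ZZ)$ acting on $(y,z)$; I would pin this block down to be lower triangular with diagonal entries $\pm 1$ by tracking where the second-width direction $(0,1,0)$ may be sent. Once $A$ is lower triangular, the projection $\pi\colon (x,y,z) \mapsto (x,y)$ is canonical, so its image $\{(0,0),(0,w_2),(1,0),(1,y_1)\}$ and the invariant $y_1$ it carries (via Proposition~\ref{prop:width1_quad_2}) are preserved. Type~\eqref{item:tet_0011_0} is isolated immediately, since it is the only family possessing an edge parallel to the $z$-axis (the segment from $(1,0,0)$ to $(1,0,w_3)$), a property preserved by lower-triangular maps; this also recovers $y_1 = 0$. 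The residual freedom is then the shear $z \mapsto \pm z + (\text{multiple of } y) + (\text{multiple of }x)$, so the surviving invariants are the $z$-displacements of the two fibre edges taken modulo the corresponding $y$-displacements. Comparing these separates~\eqref{item:tet_0011_1} from~\eqref{item:tet_0011_2} and recovers $z_1$.

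The main obstacle is precisely the step forcing $B$ to be lower triangular, which is exactly where the hypothesis $w_2 < w_3$ is used, and the genuinely different case $w_2 = w_3$. There the second-width direction is no longer unique, $\pi$ need not be canonical, and an equivalence may additionally interchange the roles of the $y$- and $z$-directions; I expect the careful bookkeeping here, together with ruling out equivalences that cross between families~\eqref{item:tet_0011_1} and~\eqref{item:tet_0011_2} once their planar projections agree, to be where the real work lies. The extra restrictions imposed in Definition~\ref{def:tet_class} in this case, namely $y_1 \leq z_1$ in~\eqref{item:tet_0011_1} and $z_1 \leq w_2 - y_1$ in~\eqref{item:tet_0011_2}, are exactly what single out a canonical representative among the competing projections, and I would verify that they eliminate every remaining coincidence. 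Finally, the low cases $\cS_{1,1,w_3}$ and $\cS_{1,1,1}$ contain only three and two tetrahedra respectively, and I would dispatch these directly by comparing, for instance, edge lengths or lattice point counts.
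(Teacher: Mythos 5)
Your skeleton matches the paper's in outline: for \(w_2>1\) the width-1 direction is unique up to sign, so the multiset of \(x\)-coordinates is an invariant separating type~\eqref{item:tet_0001} from the rest, within which Theorem~\ref{thm:triangles} applied to the triangle facet finishes (this is exactly the paper's argument); and your lower-triangular reduction in the generic case \(w_2<w_3\) is sound, since any dual vector \(u\) with \(u_3\neq 0\) satisfies \(\width_u(T)\geq w_3>w_2\), forcing the second row of the linear part to be \((\ast,\pm1,0)\), after which the projection to \((x,y)\) is canonical and \(y_1\) is recovered via the four-point-set classification. The small cases \(\cS_{1,1,w_3}\) and \(\cS_{1,1,1}\) are also unproblematic (the paper simply compares normalised volumes \(w_3,\,2w_3-1,\,2w_3\) and \(1,2\)). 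However, two genuine gaps remain. First, even in the generic case, the step ``comparing these separates~\eqref{item:tet_0011_1} from~\eqref{item:tet_0011_2} and recovers \(z_1\)'' is asserted rather than proved, and the invariant you propose is too coarse as stated: the residual shear \(z\mapsto z+k_2y\) shifts the \(z\)-displacements of the two fibre edges \emph{simultaneously} (by \(k_2w_2\) and \(k_2y_1\)), so reducing each independently modulo its \(y\)-displacement destroys information. Concretely, within family~\eqref{item:tet_0011_1} the tetrahedra with \(z_1=w_3\) and \(z_1=w_3-w_2\) have identical values of both of your reduced invariants, yet are inequivalent, since their normalised volumes \(w_2w_3+z_1y_1\) differ (as \(y_1>0\)). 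One must treat the two edge data jointly, or bring in different invariants as the paper does: normalised volume recovers \(z_1\) within each of families~\eqref{item:tet_0011_1} and~\eqref{item:tet_0011_2}, a stabiliser analysis of the facet \(\conv((0,0,0),(1,0,0),(1,0,w_3))\) handles family~\eqref{item:tet_0011_0}, and the cross-case \eqref{item:tet_0011_1}-versus-\eqref{item:tet_0011_2} requires an explicit comparison of all eight facet areas, showing the maximal facet area forces \(z_1=w_3\), after which equality of volumes forces \(z_1'=0\), a contradiction. Nothing in your sketch substitutes for this computation.

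Second, the case \(w_2=w_3\) is precisely where you stop (``I expect\dots'', ``I would verify\dots''), and it is where the bulk of the paper's proof lives. The paper makes \(y_1\) intrinsic by proving it is \emph{minimal} over all surjective lattice homomorphisms \(\pi\colon\ZZ^3\to\ZZ^2\) with \(\pi(T)\sim\conv((0,0),(0,w_2),(1,0),(1,y_1'))\); establishing this when \(w_2=w_3\) requires normalising the projection matrix \(P\) (first row \((1,0,0)\), then \(p_{21}=0\)), arguing via the two vertical edges of \(\pi(T)\) that one has length exactly \(w_2\), and then case-by-case inequalities such as \(\bigl|\tfrac{w_2^2}{z_1}-p_{22}(\tfrac{w_2^2}{z_1}+y_1)\bigr|\geq y_1\) to conclude \(y_1'\geq y_1\). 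Your expectation that the extra constraints \(y_1\leq z_1\) and \(z_1\leq w_2-y_1\) in Definition~\ref{def:tet_class} ``eliminate every remaining coincidence'' is correct in spirit --- they were chosen in Proposition~\ref{prop:existence} exactly to select the minimal \(y_1\) --- but the hard content is showing that minimality of \(y_1\) is preserved by arbitrary equivalences when the second and third widths coincide, and that argument is entirely missing. So: right strategy and a correct reduction for \(w_2<w_3\), but the two hardest steps --- the \eqref{item:tet_0011_1}/\eqref{item:tet_0011_2} separation and the whole \(w_2=w_3\) analysis --- are absent, and the one concrete invariant you do propose fails on an explicit pair of inequivalent tetrahedra.
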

\begin{proof}
The two tetrahedra in \(\cS_{1,1,1}\) have normalised volumes \(1\) and \(2\).
Since volume is an affine invariant they must be distinct.
If \(w_3>1\) the three tetrahedra in \(\cS_{1,1,w_3}\) have normalised volumes \(w_3\), \(2w_3-1\) and \(2w_3\) so must also be distinct.

In the remaining cases our goal is to show that if \(T\) and \(T'\) are equivalent tetrahedra in \(\cS_{1,w_2,w_3}\) then either \(T=T'\) or this leads to a contradiction.
We have \(w_2>1\) so up to sign \(u_1=(1,0,0)\) is the unique vector such that each tetrahedron has width \(1\) with respect to \(u_1\).
Let \(T\) and \(T'\) be equivalent tetrahedra in \(\cS_{1,w_2,w_3}\) then the image of the vertices of \(T\) and \(T'\) under \(u_1\) must be equivalent.
In other words, the set of \(x\)-coordinates of two equivalent tetrahedra in \(\cS_{1,w_2,w_3}\) is also equivalent.
Therefore, tetrahedra of the form~\ref{item:tet_0001} are always distinct from the others.
Furthermore, if \(T\) and \(T'\) are equivalent tetrahedra of the form \ref{item:tet_0001} then they each have a unique facet with normal \(u_1\), so these facets must be equivalent too.
These facets were triangles in \(\cS_{w_2,w_3}\) so by \cite[Theorem 1.1]{triangles} this means \(T=T'\).

Now let \(T\) and \(T'\) be equivalent tetrahedra in \(\cS_{1,w_2,w_3}\) of the forms~\ref{item:tet_0011_0}, \ref{item:tet_0011_1} or \ref{item:tet_0011_2}.
By Lemma~\ref{lem:y1_is_min} if we project the sets of vertices of \(T\) and \(T'\) onto the first two coordinates we get the same set.
That is the \(x\)- and \(y\)-coordinates of \(T\) and \(T'\) are the same.
This immediately tells us that tetrahedra of the form~\ref{item:tet_0011_0} are distinct from those of the forms~\ref{item:tet_0011_1} and \ref{item:tet_0011_2}.
For the rest we will show the following four facts:
\begin{enumerate}
    \item[A.] If both \(T\) and \(T'\) are of the form~\ref{item:tet_0011_0} then \(T=T'\)
    \item[B.] If both \(T\) and \(T'\) are of the form~\ref{item:tet_0011_1} then \(T=T'\)
    \item[C.] If both \(T\) and \(T'\) are of the form~\ref{item:tet_0011_2} then \(T=T'\)
    \item[D.] If \(T\) is of the form~\ref{item:tet_0011_1} and \(T'\) is of the form~\ref{item:tet_0011_2} then we get a contradiction.
\end{enumerate}

A. Unless \(w_2=w_3\) and \(z_1=z_1'=0\) the only edge of \(T\) or \(T'\) with lattice length \(w_3\) is the one from \((1,0,0)\) to \((1,0,w_3)\).
Therefore, either \(T=T'\) or the affine map taking \(T\) to \(T'\) preserves this edge.
There are only four ways to map vertices of \(T\) to \(T'\) satisfying this.
Define the map \(\theta\) by \((x,y,z) \mapsto (x,y,z,1)\) and let \(\pi : \ZZ^4 \to \ZZ^3\) be the projection onto the first three coordinates.
Affine maps in \(\ZZ^3\) are exactly the maps \((x,y,z) \mapsto \pi(\theta(x,y,z)U)\) where \(U\) is a unimodular matrix with last column \((0,0,0,1)^T\).
Let \(M\) and \(M'\) be the matrices whose rows are the vertices of \(\theta(T)\) and \(\theta(T')\) respectively.
Let \(\sigma M\) denote the matrix obtained by permuting the rows of \(M\) according to \(\sigma\).
At least one of \(M^{-1}M'\), \(((12)M)^{-1}M'\), \(((34)M)^{-1}M'\) or \(((12)(34)M)^{-1}M'\) is unimodular.
These matrices are
\begin{align*}
\begin{pmatrix}
    1 & 0 & 0 & 0\\
    0 & 1 & 0 & 0\\
    0 & (z_1'-z_1)/w_2 & 1 & 0\\
    0 & 0 & 0 & 1
\end{pmatrix},\quad
&\begin{pmatrix}
    1 & 0 & 0 & 0\\
    -w_2 & -1 & 0 & w_2\\
    -z_1' & -(z_1+z_1')/w_2 & 1 & z_1'\\
    0 & 0 & 0 & 1
\end{pmatrix}\\
\begin{pmatrix}
    1 & 0 & 0 & 0\\
    0 & 1 & 0 & 0\\
    w_3 & (z_1 + z_1')/w_2 & -1 & 0\\
    0 & 0 & 0 & 1
\end{pmatrix},\quad
&\begin{pmatrix}
    1 & 0 & 0 & 0\\
    -w_2 & -1 & 0 & w_2\\
    w_3 -z_1' & (z_1-z_1')/w_2 & -1 & z_2\\
    0 & 0 & 0 & 1
\end{pmatrix}
\end{align*}
Therefore, either \((z_1-z_1')/w_2\) or \((z_1+z_1')/w_2\) is an integer.
In either case, the fact that \(0 \leq z_1 ,z_1' \leq \frac{w_2}{2}\) forces \(T=T'\).

B. The normalised volume of \(T\) and \(T'\) is \(w_2w_3+z_1y_1 = w_2w_3+z_1'y_1\), therefore \(z_1=z_1'\) and \(T=T'\).

C. The normalised volume of \(T\) and \(T'\) is \(w_2w_3-w_2z_1+y_1w_3 = w_2w_3-w_2z_1'+y_1w_3\), therefore \(z_1=z_1'\) and \(T=T'\).

D. Let \(P\) and \(P'\) be the parallelograms obtained by intersecting \(2T\) and \(2T'\) with the plane \(x=1\).
These are:
\begin{align*}
    &P=\conv((0,w_3),(y_1,0),(w_2,w_3+z_1),(w_2+y_1,z_1))\\
    &P'=\conv((0,w_3),(y_1,z_1'),(w_2,2w_3),(w_2+y_1,w_3+z_1'))
\end{align*}
Since \(T\) and \(T'\) are equivalent so are \(P\) and \(P'\).
We will show that this leads to a contradiction.
By the symmetries of a parallelogram, \(P-(0,w_3)\) must be unimodular equivalent to either \(P'-(0,w_3)\) or \(P'-(y_1,z_1')\).
Consider three matrices \(M, M_1\) and \(M_2\) whose rows are the following vertices of \(P-(0,w_3)\), \(P'-(0,w_3)\) and \(P'-(y_1,z_1')\) adjacent to the origin:
\[
M = \begin{pmatrix}
    y_1 & -w_3 \\ w_2 & z_1
\end{pmatrix}, \quad
M_1 = \begin{pmatrix}
    y_1 & z_1'-w_3 \\ w_2 & w_3
\end{pmatrix}, \quad
M_2 = \begin{pmatrix}
    -y_1 & w_3-z_1' \\ w_2 & w_3
\end{pmatrix}.
\]
One of \(M_1^{-1}M, ((12)M_1)^{-1}M, M_2^{-1}M\) and \(((12)M_2)^{-1}M\) must be a unimodular matrix. 
The necessary inverses are
\begin{align*}
M_1^{-1}=\frac{1}{V}
\begin{pmatrix}
    w_3 & w_3-z_1' \\ -w_2 & y_1
\end{pmatrix}, \quad
&M_2^{-1}=\frac{1}{V}
\begin{pmatrix}
    -w_3 & w_3-z_1' \\ w_2 & y_1
\end{pmatrix}\\
((12)M_1)^{-1}=\frac{1}{V}
\begin{pmatrix}
    w_3-z_1' & w_3 \\ y_1 & -w_2
\end{pmatrix}, \quad
&((12)M_2)^{-1}=\frac{1}{V}
\begin{pmatrix}
    w_3-z_1' & -w_3 \\ y_1 & w_2
\end{pmatrix}
\end{align*}
where \(V=w_2w_3-w_2z_1'+w_3y_1\).
Using these we know that either
\begin{align*}
&U=M_1^{-1}M=\frac{1}{V}\begin{pmatrix}
    w_2w_3 - w_2z_1' + w_3y_1 & -w_3^2 + w_3z_1 - z_1z_1'\\
    0  & w_2w_3 + z_1y_1
\end{pmatrix}
\end{align*}
is a unimodular matrix or one of the following is an integer
\[
\frac{w_2w_3-y_1z_1'+w_3y_1}{V},\quad \frac{z_1y_1-w_2w_3}{V}, \quad \frac{-w_2w_3+w_3y_1-z_1'y_1}{V}.
\]
These are entries \((1,1)\), \((2,2)\) and \((1,1)\) of \(((12)M_1)^{-1}M, M_2^{-1}M\) and \(((12)M_2)^{-1}M\) respectively.
Since the diagonal entries of \(U\) are positive they must both be \(1\) for \(U\) to be unimodular.
From this notice that
\[
\begin{pmatrix}
y_1 & z_1'-w_3\\ w_2 & w_3
\end{pmatrix}
\begin{pmatrix}
1 & a \\ 0 & 1
\end{pmatrix} = M_1U = M = 
\begin{pmatrix}
y_1 & -w_3 \\ w_2 & z_1
\end{pmatrix}
\]
for some integer \(a\).
From this we show that \(z_1'-w_3-ay_1=-w_3\) and \(w_3-aw_2=z_1\).
Since \(w_3-w_2 \leq z_1 \leq w_3\) either \(a=0\) or \(1\).
Therefore, \(z_1'=0\) or \(y_1\) both of which are contradictory so \(U\) cannot be unimodular.

Notice that \(-z_1'y_1 > -w_2z_1'\) so the first of the fractions is at least 2.
From this we show that \(2w_2z_1' \geq w_2w_3 + w_3y_1 + z_1'y_1\) which is a contradiction since \(w_2w_3\) and \(w_3y_1\) are both greater than \(w_2z_1'\) and \(z_1'y_1\) is non-negative.
Since \(y_1 < w_2\) and \(z_1 \leq w_3\) the second fraction is at most \(-1\) from which we show \(w_2z_1' \geq z_1y_1+w_3y_1\).
This is contradictory since \(w_3 \geq w_2\) and \(y_1 > z_1'\).
Finally, since \(y_1 < w_2\) the third fraction is also at most \(-1\) so \(z_1'(w_2+y_1) \geq 2w_3y_1\).
This is a contradiction since \(z_1'<y_1\) and \(w_2+y_1<2w_2 \leq 2w_3\).
\end{proof}

We now use the results of this section to prove Theorem~\ref{thm:main}.

\begin{proof}[Proof of Theorem~\ref{thm:main}]
Proposition~\ref{prop:tet_width_is} shows that the map taking a tetrahedron to its equivalence class is a well-defined map from \(\cS_{1,w_2,w_3}\) to \(\cT_{1,w_2,w_3}\).
Propositions~\ref{prop:existence} and \ref{prop:distinct} show that it is bijective.
It remains to find the cardinality of \(\cS_{1,w_2,w_3}\).
When \(w_2=1\) this is immediate.
When \(w_2>1\) we combine the triangles classification with the new tetrahedra to get the desired counts.
\end{proof}

The generating function of the sequence counting lattice triangles with second width \(w_2\) was the Hilbert series of a hypersurface in a weighted projective space so we investigate the generating function of the cardinality of \(\cT_{1,w_2,w_3}\) denoted \(|\cT_{1,w_2,w_3}|\).
\begin{corollary}
    The generating function of \(|\cT_{1,w_2,w_3}|\) is
    \[
    \sum_{w_2=1}^\infty \sum_{w_3=w_2}^\infty t^{w_2}s^{w_3}|\cT_{1,w_2,w_3}|
    = \frac{f(s,t)}{2(1-s^2)(1-ts)^3(1+ts)}
\]
where \(f(s,t)\) is the polynomial
\begin{align*}
    & t^5s^7 + 5t^5s^6 + 4t^5s^5 - 2t^4s^7 - 5t^4s^6 - 9t^4s^5 - 4t^4s^4 + 4t^3s^6 + 13t^3s^5 + 7t^3s^4\\& - 6t^3s^3 - 5t^2s^4 - 3t^2s^3 + 4t^2s^2 - 4ts^4 - 10ts^3 - 4ts^2 + 2ts + 2s^3 + 14s^2 + 20s + 8.
\end{align*}
\end{corollary}
\begin{proof}[Sketch of proof]
This can be shown using Theorem~\ref{thm:main}.
First note that \(\sum_{w_3=w_2}^\infty s^{w_3} |\cT_{1,w_2,w_3}|\) is
\[
    \frac{s(s+2)}{1-s}
\]
when \(w_2=1\) and
\[
    s^{w_2}\frac{w_2^2(s+1)^2 + w_2(1-s^2) + (\frac32s^2 + \frac52s + \frac32) + \frac12(-1)^{w_2}(s^2 + s + 1)}{1-s^2}.
\]
otherwise.
Combining these we get the desired result.
This can all be done by hand using facts about the generating functions of polynomials or with the assistance of computer algebra.
\end{proof}
To instead count lattice tetrahedra with first width \(1\) and third width \(w_3\) we let \(t=1\) in the above generating function resulting in
\[
    \frac{- s^7 + 4s^6 + 8s^5 - 6s^4 - 3s^3 + 22s + 8}{2(1-s)^4(1+s)^2}.
\]
Neither of these generating functions share any of the properties of the one from triangles.
However, this does not prevent a function counting lattice tetrahedra of a given multi-width in general from doing so.

\section{Computational and conjectural results}
\label{sec:computational}

The above method can be adapted into an algorithm which classifies four-point sets and tetrahedra of a given multi-width.
We implement all algorithms using \textsc{Magma~V2.27}.

Algorithm~\ref{alg:quad} classifies four-point sets of multi-width \((w_1,w_2)\).
It takes the list of four-point sets in the line of width \(w_1\) and assigns a \(y\)-coordinate in the range \([0,w_2]\) to each point of each set in every possible way.
The resulting sets in the plane include all four-point sets of multi-width \((w_1,w_2)\).

We eliminate any which do not have the correct widths.
Let \(P\) be the convex hull of such a set then we use the polytope \(\cW_P\) to check its multi-width.
The \(i\)-th width of \(P\) is \(w\) if and only if the dimension of \(\conv((w-1) \cW_P\cap \ZZ^d)\) is less than \(i\) and the dimension of \(\conv(w \cW_P\cap \ZZ^d)\) is at least \(i\).
This allows us to check if the multi-width of a polytope is equal to \((w_1,w_2)\) without necessarily calculating its multi-width.

We also discard repeated sets using an affine unimodular normal form.
Kreuzer and Skarke introduced a unimodular normal form for lattice polytopes in their \textsc{Palp} software \cite{PALP}.
This can be extended to an affine normal form by translating each vertex of a polytope to the origin in turn and finding the minimum unimodular normal form among these possibilities.
If the convex hull of a four-point set is a quadrilateral we can use this normal form without adjustment.
If the convex hull is a triangle we find the normal form of this triangle then consider the possible places the fourth point can be mapped to in this normal form.
We choose the minimum such point and call the set of vertices of the triangle and this point the normal form of the four-point set denoted \(\NF(S)\).
Note that we keep the normal forms of each set as well as the set itself as we need the four-point sets written as a subset of \([0,w_1] \times [0,w_2]\) for the tetrahedra classification.

\begin{algorithm}
\DontPrintSemicolon
\KwData{The set \(\cP\) of all lattice points in \(Q_{w_1} = \conv((0,0),(0,w_1),(\frac{w_1}{2},\frac{w_1}{2})\).}
\KwResult{The set \(\cA\) containing all four-point sets in the plane with multi-width \((w_1,w_2)\) written as a subset of \([0,w_1] \times [0,w_2]\).}
\(\cA \longleftarrow \emptyset\)\;
NormalForms \(\longleftarrow \emptyset\)\;
\For{\((x_1,x_2) \in \cP\)}{
    \For{\(h_1,h_2,h_3,h_4 \in [0,w_2]\cap\ZZ\) such that \(h_i=0\) and \(h_j=w_2\) for some \(i<j\)}{
        \(S \longleftarrow \{(0,h_1),(x_2,h_2),(x_3,h_3),(w_1,h_4)\}\)\;
        \If{\(\mwidth(S) = (w_1,w_2)\) and \(\NF(S) \notin\) \emph{NormalForms}}{
            \(\cA \longleftarrow \cA \cup \{S\}\)\;
            NormalForms \(\longleftarrow\) NormalForms \(\cup\) \(\{\NF(S)\}\)\;
            
        }
    }
}
\caption{Classifying the four-point sets in \(\ZZ^2\) with multi-width \((w_1,w_2)\).}
\label{alg:quad}
\end{algorithm}

Running Algorithm~\ref{alg:quad} for small widths produces Table~\ref{tab:quad}.
We use this data to give estimates for a function counting the width \((w_1,w_2)\) four-point sets in the plane.
To decide how much data to compute we use the following.
\begin{proposition}\label{prop:four_pt_set_bound}
There are at most \((\frac{w_1^2}{4} + w_1 + c)(6w_2^2+1)\) four-point sets in the plane with multi-width \((w_1,w_2)\) where \(c=1\) if \(w_1\) is even and \(c=\frac34\) if \(w_1\) is odd.
\end{proposition}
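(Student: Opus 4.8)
The plan is to bound the number of affine equivalence classes by choosing, for each class, a normal-form representative inside the strip \([0,w_1]\times[0,w_2]\), and then bounding separately the number of admissible \(x\)-coordinate patterns and \(y\)-coordinate patterns. By Proposition~\ref{prop:in_strip} every four-point set \(S\) of multi-width \((w_1,w_2)\) is equivalent to a subset of \([0,w_1]\times[0,w_2]\) in which \((1,0)\) realises the first width and \((0,1)\) realises the second. After translating so that the smallest \(y\)-coordinate is \(0\), the largest is forced to equal \(w_2\). Such a representative is then determined by the four \(x\)-coordinates of its points, which form a four-point set in \(\ZZ\) of width \(w_1\), together with the tuple \((h_0,h_1,h_2,h_3)\in\{0,\dots,w_2\}^4\) of their \(y\)-coordinates, which has minimum \(0\) and maximum \(w_2\).

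First I would bound the \(x\)-part. By Proposition~\ref{prop:marked_lines_class} the width-\(w_1\) four-point sets in \(\ZZ\) are in bijection with the lattice points of \(Q_{w_1}\), of which there are \(\frac{w_1^2}{4}+w_1+c\). The reflection \(x\mapsto w_1-x\) and the relabelling of points used to place the \(x\)-coordinates into this form are restrictions of affine unimodular maps of the plane that fix every \(y\)-coordinate, so every class admits a representative whose \(x\)-coordinates are exactly one of these \(\frac{w_1^2}{4}+w_1+c\) patterns. Next, for a fixed \(x\)-pattern, I would count the admissible \(y\)-tuples. By inclusion–exclusion the number of tuples in \(\{0,\dots,w_2\}^4\) with at least one entry equal to \(0\) and at least one equal to \(w_2\) is
\[
(w_2+1)^4-2w_2^4+(w_2-1)^4 = 12w_2^2+2.
\]
The reflection \((x,y)\mapsto(x,w_2-y)\) is an affine unimodular map preserving the strip and the \(x\)-pattern, acting on these tuples by \(h_i\mapsto w_2-h_i\); any two tuples in a common orbit yield equivalent four-point sets. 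A fixed tuple would have \(h_i=w_2/2\) for every \(i\), which is incompatible with the tuple containing both \(0\) and \(w_2\) when \(w_2>0\), so the involution acts freely and pairs the \(12w_2^2+2\) tuples into exactly \(6w_2^2+1\) orbits.

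Combining the two bounds gives at most \(\bigl(\frac{w_1^2}{4}+w_1+c\bigr)(6w_2^2+1)\) classes, via the surjection sending each pair (\(x\)-pattern, \(y\)-orbit) to the equivalence class of the set it determines; since every class has at least one such representative, this is a genuine upper bound. The bound is deliberately crude, as not every admissible tuple produces multi-width exactly \((w_1,w_2)\) and residual automorphisms of the \(x\)-pattern may identify further tuples, but both effects only decrease the count. The main point to get right is the freeness of the reflection: it is precisely the absence of fixed points, guaranteed by \(w_2>0\) forcing \(w_2/2\notin\{0,w_2\}\), that converts the naive count \(12w_2^2+2\) into the sharp factor \(6w_2^2+1\). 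The remaining care is bookkeeping, namely checking that fixing the \(x\)-pattern and halving by the reflection overcount rather than undercount, so that the product really dominates the number of classes.
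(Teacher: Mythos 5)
Your proof is correct, and it shares the paper's overall decomposition: the number of classes is bounded by the number of width-$w_1$ four-point sets in $\ZZ$ from Proposition~\ref{prop:marked_lines_class}, times the number of admissible $y$-assignments up to the reflection $y \mapsto w_2 - y$, and your justification that the $x$-normalisation and the $y$-reflection extend to affine unimodular maps of the plane (so that overcounting from residual symmetries and from tuples of the wrong multi-width only helps the bound) is sound. Where you genuinely diverge is in how the factor $6w_2^2+1$ is produced. The paper picks a canonical representative in each reflection orbit by demanding that the first index attaining $0$ precede the first index attaining $w_2$, with both indices minimal, and then counts the six cases for this pair of indices (a count it leaves implicit; the six cases contribute $(w_2+1)^2$, $w_2(w_2+1)$, $w_2^2$, $w_2^2-1$, $w_2(w_2-1)$ and $(w_2-1)^2$, summing to $6w_2^2+1$). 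You instead count all tuples in $\{0,\dots,w_2\}^4$ containing both $0$ and $w_2$ by inclusion--exclusion, obtaining $(w_2+1)^4-2w_2^4+(w_2-1)^4 = 12w_2^2+2$, and halve by checking that the reflection acts freely, since a fixed tuple would be constant equal to $w_2/2$ and hence could not contain $0$ or $w_2$ when $w_2>0$. The two computations agree because exactly one tuple in each orbit has its minimal zero-index before its minimal $w_2$-index. Your route makes the provenance of the $+1$ transparent, as $(12w_2^2+2)/2$ rather than the residue of a case analysis, and isolates the one point where positivity of $w_2$ is genuinely used; the paper's canonical-representative formulation has the mild advantage that it matches the enumeration condition $h_i=0$, $h_j=w_2$ for some $i<j$ used in Algorithm~\ref{alg:quad}.
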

\begin{proof}
By Proposition~\ref{prop:marked_lines_class} we know that there are 
\[
\begin{cases}
\frac{w_1^2}{4} + w_1 + 1 & \text{if \(w_1\) even}\\
\frac{w_1^2}{4} + w_1 +\frac34 &\text{if \(w_1\) odd}
\end{cases}
\]
four-point sets in the line with width \(w_1\).
Let \((y_1,\dots,y_4) \in [0,w_2]^4\) be a lattice point representing the \(y\)-coordinates we give to each point.
We know that there exist indices \(i_0\) and \(i_1\) such that \(y_{i_0}=0\) and \(y_{i_1}=w_2\).
By a reflection we may assume that \(i_0 < i_1\).
We also assume these are as small as possible.
Counting the possibilities in each of the six cases we show that there are at most \(6w_2^2+1\) ways to assign \(y\)-coordinates to a four-point set in the line.
\end{proof}
\begin{table}[ht]
    \centering
    \begin{tabular}{@{} ccccccccccccc @{}}
    \headercell{} & \multicolumn{12}{c@{}}{\(w_2\)}\\
    \cmidrule(l){2-13}
    \(w_1\) & 1 & 2 & 3 & 4 & 5 & 6 & 7 & 8 & 9 & 10 & 11 & 12\\
    \midrule
    1 & 2 & 5 & 6 & 8 & 9 & 11 & 12 & 14 & 15 & 17 & 18 & 20\\
    2 & 0 & 13 & 31 & 42 & 49 & 60 & 67 & 78 & 85 & 96 & 103 & 114\\
    3 & 0 & 0 & 39 & 101 & 123 & 148 & 170 & 195 & 217 & 242 & 264 & 289   \\
    4 & 0 & 0 & 0 & 114 & 282 & 342 & 394 & 454 & 506 & 566 & 618 & 678\\
    5 & 0 & 0 & 0 & 0 & 254 & 624 & 727 & 835 & 938 & 1046 & 1149 & 1257\\
    6 & 0 & 0 & 0 & 0 & 0 & 520 & 1239 & 1428 & 1605 & 1794 & 1971 & 2160\\
    7 & 0 & 0 & 0 & 0 & 0 & 0 & 937 & 2206 & 2490 & 2781 & 3065 & 3356 \\
    8 & 0 & 0 & 0 & 0 & 0 & 0 & 0 & 1595 & 3682 & 4120 & 4542 & 4980 \\
    9 & 0 & 0 & 0 & 0 & 0 & 0 & 0 & 0 & 2527 & 5775 & 6380 & 6994\\
    10 & 0 & 0 & 0 & 0 & 0 & 0 & 0 & 0 & 0 & 3851 & 8687 & 9534\\
    11 & 0 & 0 & 0 & 0 & 0 & 0 & 0 & 0 & 0 & 0 & 5610 & 12555\\
    12 & 0 & 0 & 0 & 0 & 0 & 0 & 0 & 0 & 0 & 0 & 0 & 7949\\
    \end{tabular}
    \caption{The number of four-point sets with multi-width \((w_1,w_2)\) up to affine equivalence.}
    \label{tab:quad}
\end{table}
A \emph{quasi-polynomial} is a polynomial whose coefficients are periodic functions with integral period.
By Theorem~\ref{thm:main} and \cite[Theorem 1.2]{triangles} the functions counting lattice triangles and width 1 lattice tetrahedra are piecewise quasi-polynomials whose coefficients have period 2 so we may expect a function counting four-point sets in the plane to be similar.
By Proposition~\ref{prop:four_pt_set_bound}, if there is a quasi-polynomial counting four-point sets of multi-width \((w_1,w_2)\) we expect it to be at most quadratic in \(w_2\).
We expect the case when \(w_1=w_2\) to be distinct due to the increased symmetry.
Also we expect the cases when \(w_2\) is odd and even to be distinct so consider them separately.
By fitting a quadratic to the results for \((w_1,w_1+1),\dots, (w_1,w_1+5)\) and \((w_1,w_1+2),\dots,(w_1,w_1+6)\) we obtain the following conjecture which agrees with the entries of Table~\ref{tab:quad}.
\begin{conjecture}
The number of four-point sets of multi-width \((w_1,w_2)\) if \(w_1<w_2\) is
\[
\begin{cases}
9w_2 + 6 & \text{if \(w_2\) even}\\
9w_2 + 4 & \text{if \(w_2\) odd}
\end{cases}
\]
when \(w_1=2\),
\[
\begin{cases}
\frac{47}{2}w_2 + 7 & \text{if \(w_2\) even}\\
\frac{47}{2}w_2 + \frac{11}{2} & \text{if \(w_2\) odd}
\end{cases}
\]
when \(w_1=3\),
\[
\begin{cases}
56w_2 + 6 & \text{if \(w_2\) even}\\
56w_2 + 2 & \text{if \(w_2\) odd}
\end{cases}
\]
when \(w_1=4\),
\[
\begin{cases}
\frac{211}{2}w_2 - 9 & \text{if \(w_2\) even}\\
\frac{211}{2}w_2 - \frac{23}{2} & \text{if \(w_2\) odd}
\end{cases}
\]
when \(w_1=5\) 
\[
\begin{cases}
183w_2 - 36 & \text{if \(w_2\) even}\\
183w_2 - 42 & \text{if \(w_2\) odd}
\end{cases}
\]
when \(w_1=6\)
\[
\begin{cases}
\frac{575}{2}w_2-94 & \text{if \(w_2\) even}\\
\frac{575}{2}w_2-\frac{195}{2} & \text{if \(w_2\) odd}
\end{cases}
\]
when \(w_1=7\) and
\[
\begin{cases}
430w_2-180 & \text{if \(w_2\) even}\\
430w_2-188 & \text{if \(w_2\) odd}
\end{cases}
\]
when \(w_1=8\).
\end{conjecture}
It is tempting to fit cubics in \(w_1\) to the coefficients of these polynomials to get a quasi-polynomial counting four-point sets whenever \(w_1 < w_2\).
However, the result is
\[
\begin{cases}
(\frac56w_1^3 + \frac16 w_1 + 2)w_2 -\frac54w_1^3+\frac{39}{4}w_1^2 - \frac{47}{2}w_1 + 24 & \text{ if \(w_1,w_2\) even}\\
(\frac56w_1^3 + \frac16 w_1 + 2)w_2 -\frac54w_1^3 + \frac{39}{4}w_1^2 - \frac{49}{2}w_1 + 24 & \text{ if \(w_1\) even and \(w_2\) odd}\\
(\frac56w_1^3 + \frac16w_1 + \frac12)w_2 -w_1^3 + \frac{51}{8}w_1^2 - 10w_1 + \frac{53}{8} & \text{ if \(w_1\) odd and \(w_2\) even}\\
(\frac56w_1^3 + \frac16w_1 + \frac12)w_2 -w_1^3 + \frac{51}{8}w_1^2 - \frac{21}{2}w_1 + \frac{53}{8} & \text{ if \(w_1,w_2\) odd}
\end{cases}
\]
which disagrees with Table~\ref{tab:quad} whenever \(w_1 \geq 9\).
This suggests that either there is no such quasi-polynomial or that for small values of \(w_1\) we have special cases and so cannot predict it from this data.
Taking successive differences of a sequence can help to identify when it is given by a quasi-polynomial since higher order terms cancel making the pattern more obvious.
Considering successive differences (and successive differences of these differences etc.) of the sequence counting multi-width \((w_1,w_1+1)\) four-point sets, it seems that if such a quasi-polynomial exists we would need significantly more data-points to estimate it.
Therefore, we do not attempt to classify enough four-point sets to make such a conjecture.

Using the classification of four-point sets, we move on to classify tetrahedra.
This uses a similar algorithm to the four-point set case (see Algorithm~\ref{alg:tet}) with two main differences.
We may no longer assume that all the tetrahedra we want to classify are contained in a \(w_1 \times w_2 \times w_3\) box so must allow more \(z\)-coordinates to be assigned to each point.
Also, since we are not extending this classification to a higher dimension, we need only store the normal form of each tetrahedron in order to count them.

\begin{algorithm}
\DontPrintSemicolon
\KwData{The set \(\cA\) containing all four-point sets in the plane with multi-width \((w_1,w_2)\) written as a subset of \([0,w_1] \times [0,w_2]\).}
\KwResult{The set \(\cT\) containing all tetrahedra with multi-width \((w_1,w_2,w_3)\).}
\(\cT \longleftarrow \emptyset\)\;
\For{\(\{v_1,v_2,v_3,v_4\} \in \cA\)}{
    \For{\(h_1,h_2,h_3,h_4 \in [0,\max\{w_1+w_2,w_3\}]\cap \ZZ\) such that \(h_i=0\) and \(h_j  \geq w_3\) for some \(i<j\)}{
        \(T \longleftarrow \conv(v_i\times\{h_i\} : i=1,2,3,4)\)\;
        \If{\(\mwidth(T) = (w_1,w_2,w_3)\)}{
            \(\cT \longleftarrow \cT \cup \{\NF(T)\}\)\;
        }
    }
}
\caption{Classifying the tetrahedra with multi-width \((w_1,w_2,w_3)\).}
\label{alg:tet}
\end{algorithm}

Based on Theorem~\ref{thm:main} we may hope that the tetrahedra of multi-width \((2,w_2,w_3)\) are counted by some quadratic functions in \(w_2\).
Since we can fit a quadratic to any three points we would like at least 4 points in each subsequence of \(|\cT_{2,w_2,w_3}|\) to make a reasonable conjecture.
Including the case \(w_2=2\) makes the resulting polynomials higher degree so we need to classify at least multi-width \((2,w_2,w_2)\), \((2,w_2,w_2+1)\) and \((2,w_2,w_2+2)\) tetrahedra for \(w_2 = 3,\dots,10\) to get enough data points.
The classification of lattice tetrahedron with width 2, second width up to 10 and third width up to 12 can be found in the database \cite{data} and they are counted in Table~\ref{tab:width_2_tet}.
Fitting polynomials to the sequences displayed in Table~\ref{tab:width_2_tet} produces the following.
\begin{conjecture}\label{conj:width_2_tet}
\begin{itemize}
The cardinality of \(\cT_{2,w_2,w_3}\) is given by the following:
    \item When \(w_3>w_2>2\) the cardinality of \(\cT_{2,w_2,w_3}\) is
    \begin{itemize}
        \item \(\frac14(81w_2^2-18w_2+76)\) if \(w_2\) and \(w_3\) even
        \item \(\frac14(81w_2^2-18w_2+56)\) if \(w_2\) even and \(w_3\) odd
        \item \(\frac14(81w_2^2-18w_2+37)\) if \(w_2\) odd and \(w_3\) even
        \item \(\frac14(81w_2^2-18w_2+25)\) if \(w_2\) and \(w_3\) odd
    \end{itemize}
    \item when \(w_2>2\) the cardinality of \(\cT_{2,w_2,w_2}\) is
    \begin{itemize}
        \item \(\frac18(81w_2^2-22w_2+80)\) if \(w_2\) is even
        \item \(\frac18(81w_2^2-20 w_2+27)\) if \(w_2\) is odd
    \end{itemize}
    \item when \(w_3>2\) the cardinality of \(\cT_{2,2,w_3}\) is
    \begin{itemize}
        \item \(47\) if \(w_3\) is even
        \item \(45\) if \(w_3\) is odd
    \end{itemize}
    \item and  the cardinality of \(\cT_{2,2,2}\) is \(17\).
\end{itemize}  
\end{conjecture}
More generally we may also guess that the following pattern will continue to hold.
\begin{conjecture}
    There is a piecewise quasi-polynomial with 4 components counting lattice tetrahedra of multi-width \((w_1,w_2,w_3)\).
    There is a component for each combination of equalities in \(w_3 \geq w_2 \geq w_1 > 0\).
    The leading coefficient in the case \(w_3 > w_2 > w_1 > 0\) is double the leading coefficient in the case \(w_3 = w_2 > w_1 > 0\).
    For fixed \(w_1\) and \(w_2\) there are at most three values which \(|\cT_{w_1,w_2,w_3}|\) can take depending of whether \(w_3\) is odd, even or equal to \(w_2\).
\end{conjecture}

\bibliographystyle{alpha}
\bibliography{TetBib}{}
\end{document}